\providecommand{\U}[1]{\protect\rule{.1in}{.1in}}
\theoremstyle{plain}
\newtheorem{corollary}{Corollary}
\newtheorem{lemma}{Lemma}
\newtheorem{proposition}{Proposition}
\newtheorem{theorem}{Theorem}
\numberwithin{equation}{section}
\begin{document}
\title[Doubling property and vanishing order of Steklov eigenfunctions]{Doubling property and vanishing order of Steklov eigenfunctions}
\author{ Jiuyi Zhu}
\address{
 Department of Mathematics\\
Johns Hopkins University\\
Baltimore, MD 21218, USA\\
Emails:  jzhu43@math.jhu.edu }
\thanks{\noindent }
\date{}
\subjclass{47A75, 35P30, 35J65.} \keywords {Carleman estimates,
Doubling estimates, Vanishing order, Steklov eigenfunction.}
\dedicatory{ }

\begin{abstract}
The paper is concerned with the doubling estimates and vanishing
order of the Steklov eigenfunction on the boundary of a smooth
boundary domain $\mathbb R^n$. The eigenfunction is given by a
Dirichlet-to-Neumann map. We improve the doubling property shown by
Lin and Bellova \cite{BL}. Furthermore, we show that the vanishing
order of Steklov eigenfunction is everywhere less than $C\lambda$
where $\lambda$ is the Steklov eigenvalue and $C$ depends only on
$\Omega$.
\end{abstract}

\maketitle
\section{Introduction}
In this paper, we study the doubling estimates and vanishing order
of Steklov eigenfunction on the boundary of a smooth bounded domain
in $\mathbb R^n$. The Steklov eigenvalue problem is given by
\begin{equation}
\left \{ \begin{array}{lll} \triangle u=0  &\quad \mbox{in}\ \Omega, \medskip\\
\frac{\partial u}{\partial \nu}=\lambda u &\quad \mbox{on}\
\partial\Omega,
\end{array}
\right. \label{ste}
\end{equation}
where $\nu$ is the exterior unit normal and $\Omega\subset \mathbb
R^n$ is a bounded smooth domain. One can also consider the Steklov
eigenvalue problem on any smooth Riemannian manifold. It is known
that the model (\ref{ste}) has a discrete spectrum
$\{\lambda_j\}^\infty_{j=0}$ with $$0=\lambda_0<\lambda_1\leq
\lambda_2\leq \lambda_3,\cdots, \
 \ \mbox{and} \ \ \lim_{j\to\infty}\lambda_j=\infty.$$ The Steklov eigenfunctions were
introduced by Steklov \cite{St} in 1902. It interprets the vibration
of a free membrane with uniformly distributed mass on the boundary.
The model (\ref{ste}) also plays a important role in conductivity
and harmonic analysis as it was initially studied by Calder\'{o}n
\cite{C}, since the set of eigenvalues for the Steklov problem is
the same as the set of eigenvalues of the Dirichlet-to-Neumann map.
The Steklov eigenfunctions and eigenvalues have appeared in quite a
few physical fields, such as fluid mechanics, electromagnetism,
elasticity, etc. For instance, It is found applications in the
investigation of surface waves \cite{BS}, the stability analysis  of
mechanical oscillators immersed in a viscous fluid \cite{CPV}, and
the study of the vibration modes of a structure in contact with an
incompressible fluid \cite{BRS}.

Vanishing order of solution is a quantitative behavior of the strong
unique continuation property. The vanishing order of function $u$ at
$x_0$ is $l$ if  $D^{\alpha}u(x_0)=0$ for all $|\alpha|\leq l$ in
Taylor expansion. In order to obtain the order of vanishing for the
Steklov eigenfunction, we usually need to show the doubling
estimates for (\ref{ste}). Doubling estimates are also crucial tools
in proving strong unique continuation and obtaining the measure of
nodal set of eigenfunctions. It describes that the $L^2$ norm of
solutions in a ball $\mathbb B_{2r}$ is controlled by the $L^2$ norm
of solutions in a small ball $\mathbb B_{r}$. For the eigenfunction
of the Laplace-Beltrami operator $\triangle_g$ on the smooth
Riemannian manifold $\mathcal {M}$, i.e.
\begin{equation}-\triangle_g u+\lambda u=0 \quad \mbox{on} \
\mathcal {M}, \label{eig}
\end{equation}
Donnelly and Fefferman \cite{DF} showed that the vanishing order of
$u$ is bounded everywhere in $\mathcal {M}$ by $C\sqrt{\lambda}$,
where $C$ depends only on the manifold $\mathcal {M}$ and $\lambda$
is large. Their proof relies on the following doubling estimates
\begin{equation} \int_{\mathbb B_{2r}(x)}u^2\leq e^{C\sqrt{\lambda}}
\int_{\mathbb B_{r}(x)}u^2 \label{dou}
\end{equation} for any $x\in  \mathcal {M}$. With the aid of
(\ref{dou}), Donnelly and Fefferman \cite{DF} obtained the bound for
the nodal set of eigenfunctions on an analytic manifold, that is,
$$ C\sqrt{\lambda}\leq H^{n-1}(\{x\in \mathcal {M}, u(x)=0\})\leq
C\sqrt{\lambda}$$ with $C$ depending only on $\mathcal {M}.$

The study of doubling estimates and nodal sets of Steklov
eigenfunctions for large $\lambda$ was first initiated by  Lin and
Bellova in \cite{BL}. They were able to prove that
\begin{equation}\int_{\mathbb B_{2r}(x)\cap \partial \Omega}u^2\leq e^{C\lambda^5}
\int_{\mathbb B_{r}(x)\cap\partial \Omega}u^2 \label{dou2}
\end{equation}
for $x\in\partial\Omega$ and $r\leq {r_0}/{\lambda}$ with $r_0$, $C$
depending only on $\Omega$ and $n$. Furthermore, they obtained the
upper bound for the nodal set of Steklov eigenfunctions on the
analytic domain. It was shown in \cite{BL} that
$$ H^{n-2}(\{x\in\partial\Omega, u(x)=0\})\leq
C\lambda^6.$$  Later on, Zelditch \cite{Z} was able to improve the
upper bound and show that optimal upper bound for the nodal set is
$C\lambda$ on analytic domains.  Zelditch's proof does not rely on
doubling estimates. It is left as an interesting problem to study
the doubling property of Steklov eigenfunctions. Our main goal is to
improve the doubling estimates in (\ref{dou2}) on smooth domains.
\begin{theorem}
Let $\Omega\in \mathbb R^n$ be a smooth domain. There exist positive
constants $r_0$, $C$ depending only on $\Omega$ such that
\begin{equation}\int_{\mathbb B_{2r}(x)\cap \partial \Omega}u^2\leq e^{C\lambda}
\int_{\mathbb B_{r}(x)\cap\partial \Omega}u^2
\end{equation}
holds with $r\leq {r_0}/{\lambda}$ and $x\in\partial\Omega$.
\label{th1}
\end{theorem}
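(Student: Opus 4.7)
My plan is to establish a three-ball inequality on $\partial\Omega$ via a Carleman estimate adapted to the Steklov boundary condition, and then iterate it to obtain the doubling bound.

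First, I would reduce to a local problem. Fix $x_0\in\partial\Omega$ and choose smooth local coordinates that flatten $\partial\Omega$ near $x_0$, so that locally $\Omega$ becomes $\{x_n>0\}$ and $\partial\Omega$ becomes $\{x_n=0\}$; the Laplacian is replaced by a second-order elliptic operator $L$ with smooth coefficients, and the Steklov condition becomes $\partial_{x_n}u=-\lambda u$ on $\{x_n=0\}$. The substitution $u(x)=e^{-\lambda x_n}v(x)$ then transfers the eigenvalue from the boundary into the interior: $v$ satisfies
$$
Lv+2\lambda\, b\cdot\nabla v+\lambda^2\, c\, v=0
$$
on $\{x_n>0\}$ (in the model case $L=\Delta$ this reads $(\partial_n-\lambda)^2 v+\Delta_{x'}v=0$) with the homogeneous Neumann condition $\partial_{x_n}v=0$ on $\{x_n=0\}$. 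Even reflection across $\{x_n=0\}$ extends $v$ to a function $\tilde v$ on a full ball satisfying an elliptic PDE of the same form with bounded coefficients.

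Next, I would prove a Carleman estimate of the standard type
$$
\tau^3\int e^{2\tau\varphi}|\tilde v|^2 + \tau\int e^{2\tau\varphi}|\nabla\tilde v|^2 \leq C\int e^{2\tau\varphi}|L\tilde v|^2
$$
with a radial weight $\varphi$ (e.g.\ $\varphi(x)=\log|x|^{-1}$ augmented by a convex correction) and take $\tau\sim\lambda$. Using the equation, $|L\tilde v|^2\leq C(\lambda^2|\nabla\tilde v|^2+\lambda^4|\tilde v|^2)$, so a naive absorption of the zero-order term $\lambda^4|\tilde v|^2$ into $\tau^3|\tilde v|^2$ would require $\tau\geq C\lambda^{4/3}$ and of the first-order term $\lambda^2|\nabla\tilde v|^2$ into $\tau|\nabla\tilde v|^2$ would require $\tau\geq C\lambda^2$, yielding only the $e^{C\lambda^2}$ bound. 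The improvement exploits the restriction $r\leq r_0/\lambda$: on a ball of this radius one rescales $x=ry$ so that the first-order coefficient becomes $\lambda r\leq r_0$ and the zero-order coefficient becomes $(\lambda r)^2\leq r_0^2$, both $O(1)$; a Caccioppoli-type argument combined with integration by parts then lets the remaining $\lambda^2|\tilde v|^2$ be absorbed using $\tau|\nabla\tilde v|^2$ at the expense of only $\tau\geq C\lambda$. This is the main technical obstacle of the proof.

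With the Carleman estimate in hand at $\tau=C\lambda$, inserting a standard cutoff supported in an annulus $\{r/2\leq|x|\leq R\}$ produces the three-ball inequality
$$
\|\tilde v\|_{L^2(\mathbb B_{2r})}\leq e^{C\lambda}\|\tilde v\|_{L^2(\mathbb B_r)}^{\theta}\|\tilde v\|_{L^2(\mathbb B_R)}^{1-\theta}
$$
for some $\theta\in(0,1)$ and $R$ depending only on $\Omega$. Translating back to $u=e^{-\lambda x_n}v$ introduces only bounded factors $e^{O(\lambda r)}=O(1)$ on balls of radius $r\leq r_0/\lambda$, and restricting to $\{x_n=0\}$ using the evenness of $\tilde v$, a trace inequality, and elliptic regularity for $u$ (controlling $\|\nabla u\|$ by $\lambda\|u\|$ on $\partial\Omega$) yields the analogous three-ball inequality on $\partial\Omega$. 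Finally, a chain of overlapping three-ball inequalities along $\partial\Omega$ produces a rough global bound $\|u\|_{L^2(\partial\Omega)}\leq e^{C\lambda}\|u\|_{L^2(\mathbb B_r(x_0)\cap\partial\Omega)}$, which dominates $\|u\|_{L^2(\mathbb B_R(x_0)\cap\partial\Omega)}$ and lets the $(1-\theta)$-factor in the three-ball inequality be absorbed into $e^{C\lambda}\|u\|_{L^2(\mathbb B_r(x_0)\cap\partial\Omega)}$, giving the desired doubling estimate. The central difficulty is Step 2: obtaining a Carleman estimate with $\tau\sim\lambda$ rather than $\tau\sim\lambda^2$, which is what distinguishes this improvement from the $e^{C\lambda^5}$ bound in \cite{BL}.
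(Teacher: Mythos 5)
Your overall architecture --- conjugate away the boundary eigenvalue by $u=e^{-\lambda x_n}v$ (the paper uses $\hat u=ue^{\lambda d(x)}$), reflect evenly across $\partial\Omega$, prove a Carleman estimate with parameter $\tau\sim\lambda$, deduce a three-ball inequality, chain it globally, and restrict to the boundary --- matches the paper's. But the mechanism you propose for the central step is internally inconsistent. You need the three-ball inequality $\|\tilde v\|_{L^2(\mathbb B_{2r})}\leq e^{C\lambda}\|\tilde v\|_{L^2(\mathbb B_r)}^{\theta}\|\tilde v\|_{L^2(\mathbb B_R)}^{1-\theta}$ with \emph{outer radius $R$ fixed}, depending only on $\Omega$: the subsequent chain must have a number of links bounded independently of $\lambda$, since each link raises the small-ball norm to the power $1/\theta$ and a chain of length $m\sim\lambda$ would give a loss like $e^{-C\lambda\theta^{-c\lambda}}$, which is doubly exponential and useless. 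Yet your device for reaching $\tau\sim\lambda$ is to rescale to a ball of radius $r\le r_0/\lambda$ so that the coefficients $2\lambda b$ and $\lambda^2c$ become $O(1)$. On the fixed-scale ball $\mathbb B_R$ no such rescaling is available: there the first-order coefficient is genuinely of size $\lambda$ and the zero-order one of size $\lambda^2$, and treating them as perturbations forces $\tau\gtrsim\lambda^2$, exactly the obstruction you identified. The paper resolves this by keeping the scale fixed and building $b\cdot\nabla$ and $q$ into the conjugated operator $L_\beta$ from the outset (following Lin--Wang and Bakri--Casteras), then checking that the positivity of the cross terms survives for $\beta>C_1(1+\|b\|_{W^{1,\infty}}+\|q\|_{W^{1,\infty}}^{1/2})\sim\lambda$. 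A substitute for this --- a fixed-scale Carleman estimate with parameter linear in $\lambda$ --- is the missing ingredient in your Step 2; the ``Caccioppoli-type argument combined with integration by parts'' is not specified enough to supply it.

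A second gap is the restriction to $\{x_n=0\}$. A trace inequality controls the boundary norm by the interior norm, but the boundary doubling estimate needs the reverse direction: a lower bound for $\int_{\mathbb B_r\cap\partial\Omega}u^2$ in terms of the solid-ball norm of $\hat u$. The paper obtains this from a quantitative stability estimate for the Cauchy problem (Lemma \ref{lin}, due to Lin): if $u$ and $\partial_{x_n}u$ are small on a flat boundary portion, then $u$ is small on an interior half-ball with a H\"older exponent; combined with the vanishing Neumann datum of $\hat u$ and a Lions-type interpolation replacing $\|\nabla u\|_{L^2(\partial\Omega)}$ by $\|u\|_{L^2(\partial\Omega)}$, this converts the interior lower bound coming from solid-ball doubling into a boundary lower bound. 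It is at this stage --- not in the Carleman estimate --- that the paper uses the rescaling $x\mapsto x_0+x/\lambda$, and this is where the hypothesis $r\le r_0/\lambda$ actually enters: after rescaling the equation has coefficients bounded uniformly in $\lambda$, so Lin's lemma applies with constants independent of $\lambda$. You should separate these two uses of scale rather than invoking the rescaling inside the Carleman step.
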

An immediate consequence of the integral form of doubling estimates
is the doubling property in $L^\infty$ norm. We were able to show
the following corollary.
\begin{corollary}
There exist positive constants $r_0$, $C$ depending only on $\Omega$
and $n$ such that for $r\leq {r_0}/{\lambda}$ and
$x\in\partial\Omega$, there holds
\begin{equation}
\|u\|_{L^\infty(\mathbb
B_{\frac{2r}{\lambda}}(x)\cap\partial\Omega)}\leq
e^{C{\lambda}}\|u\|_{L^\infty(\mathbb
B_{\frac{r}{\lambda}}(x)\cap\partial\Omega)}.
\end{equation}
\label{cor1}
\end{corollary}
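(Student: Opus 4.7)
The plan is to sandwich the $L^2$ doubling estimate of Theorem~\ref{th1} between two boundary elliptic regularity bounds that convert between $L^2$ and $L^\infty$ norms. Any polynomial-in-$\lambda$ losses from these conversions will be absorbed in the exponential $e^{C\lambda}$, so no sharp control of them is needed.

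First I would apply Theorem~\ref{th1} iteratively (each intermediate radius remaining below $r_0/\lambda$) to obtain
$$\int_{\mathbb B_{3r/\lambda}(x)\cap\partial\Omega} u^2\,dS \;\leq\; e^{C\lambda}\int_{\mathbb B_{r/(2\lambda)}(x)\cap\partial\Omega} u^2\,dS,$$
and then bound the right-hand side trivially from above by $C(r/\lambda)^{n-1}\|u\|_{L^\infty(\mathbb B_{r/\lambda}(x)\cap\partial\Omega)}^2$.

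The key ingredient is a reverse inequality
$$\|u\|_{L^\infty(\mathbb B_{2r/\lambda}(x)\cap\partial\Omega)} \;\leq\; C\lambda^{\beta}\Bigl(\frac{\lambda}{r}\Bigr)^{\!(n-1)/2}\Bigl(\int_{\mathbb B_{3r/\lambda}(x)\cap\partial\Omega} u^2\,dS\Bigr)^{\!1/2}$$
for some fixed $\beta\geq 0$. I would produce it by Moser iteration (or a mean-value/Poisson-kernel argument) applied to the harmonic extension $v$ of $u$ into $\Omega$, combined with a trace inequality; the Steklov condition $\partial v/\partial\nu=\lambda v$ enters through Caccioppoli-type estimates. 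Because the boundary balls have radius $\rho\leq r_0/\lambda^2$, the dimensionless product $\rho\lambda\leq r_0/\lambda$ stays small, and the elliptic regularity constants remain polynomial in $\lambda$.

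Chaining the three bounds yields
$$\|u\|_{L^\infty(\mathbb B_{2r/\lambda}(x)\cap\partial\Omega)}^2 \;\leq\; C\lambda^{\beta}e^{C\lambda}\|u\|_{L^\infty(\mathbb B_{r/\lambda}(x)\cap\partial\Omega)}^2 \;\leq\; e^{C'\lambda}\|u\|_{L^\infty(\mathbb B_{r/\lambda}(x)\cap\partial\Omega)}^2,$$
and taking square roots completes the proof. The main obstacle I anticipate is the reverse $L^\infty$--$L^2$ step: one must verify that the elliptic regularity constants depend only polynomially on $\lambda$ for boundary balls this small. The small scale $\rho\lambda\leq r_0/\lambda$ makes the Steklov condition a small perturbation of a Neumann problem on these balls, so polynomial control is expected, after which every extra factor is swallowed by the exponential from Theorem~\ref{th1}.
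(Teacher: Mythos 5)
Your overall architecture --- sandwiching the $L^2$ doubling estimate of Theorem~\ref{th1} between two norm conversions and absorbing polynomial losses into $e^{C\lambda}$ --- matches the paper's strategy. But your ``key ingredient,'' the reverse inequality
$\|u\|_{L^\infty(\mathbb B_{2r/\lambda}\cap\partial\Omega)}\leq C\lambda^{\beta}(\lambda/r)^{(n-1)/2}\|u\|_{L^2(\mathbb B_{3r/\lambda}\cap\partial\Omega)}$,
is exactly the hard step, and the tools you cite cannot produce it. Moser iteration (or the mean value property) for the harmonic extension gives $\sup$ over a boundary ball controlled by the $L^2$ norm over a \emph{solid} (half-)ball; a trace inequality then goes in the \emph{wrong} direction, controlling boundary norms by interior norms. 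The missing link is the converse: bounding the solid-ball $L^2$ norm of the extension by the $L^2$ norm of its boundary trace. This is a quantitative Cauchy-uniqueness / propagation-of-smallness statement, not an elliptic regularity estimate, and it is false with polynomial constants for general solutions satisfying the (reflected) Neumann condition: $v=\cosh(kx_n)\cos(kx_1)$ is harmonic with $\partial v/\partial x_n=0$ on $\{x_n=0\}$, has trace of size $O(1)$ on a boundary ball, yet has interior $L^2$ norm of size $e^{k\rho}$ on the solid ball --- so no bound polynomial in the frequency can hold without extra information about $u$.

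The paper supplies that missing link as estimate (\ref{kin}), which costs $e^{C\lambda}$ (still acceptable for the corollary) and is obtained from the machinery of the proof of Theorem~\ref{th1}: the quantitative Cauchy-data lemma of Lin (Lemma~\ref{lin}, which needs smallness of \emph{both} $u$ and $\partial u/\partial x_n$ on the flat portion --- here the normal derivative of the reflected function $\hat u$ vanishes), the Lions-type interpolation Lemma~\ref{lions}, and the solid-ball doubling of Proposition~\ref{pro2} used as a normalization to convert the H\"older-type conclusion $\|u\|_{L^2}\leq C\epsilon^{\alpha}$ into a lower bound $\epsilon\geq e^{-C\lambda}$. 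Your proposal, as written, replaces all of this with an appeal to ``elliptic regularity constants polynomial in $\lambda$,'' which is unsupported and, by the example above, unavailable; the plan is repairable by inserting the $e^{C\lambda}$-loss propagation-of-smallness step, but that step is the substance of the proof and is absent.
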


 Based on Carleman estimates and doubling property, we are able to obtain the vanishing
order of Steklov eigenfunctions on $\partial\Omega$.

\begin{theorem}
The vanishing order of Steklov eigenfunction on $\partial\Omega$ is
everywhere less than $C\lambda$ where $C$  is positive constant
depending only on $\Omega$. \label{th2}
\end{theorem}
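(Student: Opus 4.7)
The plan is to derive Theorem~\ref{th2} from the doubling estimate of Theorem~\ref{th1} by a Taylor-expansion argument followed by boundary unique continuation; no further propagation-of-smallness step is required once the sharp doubling exponent $e^{C\lambda}$ is in hand. I would fix $x_0\in\partial\Omega$, assume $u$ vanishes to order $l$ at $x_0$ in the sense $D^\alpha u(x_0)=0$ for all $|\alpha|\le l$, and set $\bar r:=r_0/\lambda$.

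The first step is to iterate Theorem~\ref{th1}: going from scale $r$ up to scale $\bar r$ takes $k=\lceil\log_2(\bar r/r)\rceil$ doublings, each multiplying by $e^{C\lambda}$, so
\[
\int_{\mathbb B_{\bar r}(x_0)\cap\partial\Omega}u^2\ \le\ \Big(\tfrac{\bar r}{r}\Big)^{C\lambda/\log 2}\int_{\mathbb B_{r}(x_0)\cap\partial\Omega}u^2\qquad(0<r\le\bar r).
\]
Next, using that $\partial\Omega$ is smooth and hence $u|_{\partial\Omega}\in C^\infty(\partial\Omega)$, Taylor's theorem with integral remainder in local coordinates centered at $x_0$ combined with the vanishing hypothesis would yield
\[
\int_{\mathbb B_r(x_0)\cap\partial\Omega}u^2\ \le\ D_l\, r^{\,n-1+2(l+1)}
\]
for a finite constant $D_l$ depending on $u$, $l$, and $n$; crucially, for the argument only finiteness of $D_l$ is used, not any quantitative bound on its size. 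Boundary unique continuation then supplies a positive lower bound: if $u$ were to vanish on a boundary patch, the full Cauchy data $(u,\partial_\nu u)=(u,\lambda u)$ would vanish there, Holmgren's theorem would propagate this into $\Omega$, and classical unique continuation for harmonic functions would force $u\equiv 0$ in $\Omega$. So $I_0:=\int_{\mathbb B_{\bar r}(x_0)\cap\partial\Omega}u^2>0$.

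Combining the three estimates gives
\[
0\,<\,I_0\ \le\ D_l\,\bar r^{\,C\lambda/\log 2}\,r^{\,n+2l+1-C\lambda/\log 2}\qquad\text{for every small } r>0.
\]
Letting $r\to 0^+$ would force the right-hand side to $0$ whenever the exponent $n+2l+1-C\lambda/\log 2$ is strictly positive; since that would contradict $I_0>0$, I conclude $n+2l+1\le C\lambda/\log 2$, which yields $l\le C'\lambda$ with $C'$ depending only on $\Omega$ and $n$.

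I do not expect the main difficulty to lie in this argument: once the doubling inequality with linear-in-$\lambda$ exponent is available, the remainder of the proof is essentially a limiting argument, and all the delicate Carleman work is already invested in Theorem~\ref{th1}. The only ancillary points to verify are the boundedness of the Taylor constant $D_l$ (immediate from $C^\infty$-regularity of $u$ on the smooth boundary) and weak unique continuation at the boundary (reducing via Holmgren to unique continuation for harmonic functions in $\Omega$).
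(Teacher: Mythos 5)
Your argument is correct, but it is not the route the paper takes. You treat Theorem~\ref{th1} as a black box: iterate the boundary doubling inequality down from scale $r_0/\lambda$, bound $\int_{\mathbb B_r\cap\partial\Omega}u^2$ from above by $D_l\,r^{n+2l+1}$ via Taylor expansion (legitimate, since $u\in C^\infty(\overline\Omega)$ by elliptic regularity on the smooth boundary), secure $\int_{\mathbb B_{r_0/\lambda}\cap\partial\Omega}u^2>0$ by Holmgren plus unique continuation for harmonic functions, and let $r\to0$. Each step is sound and the conclusion $l\le C'\lambda$ follows. The paper instead goes back inside the Carleman machinery: it drops a term in the intermediate inequality (\ref{absoo}) from the proof of Proposition~\ref{pro2}, uses the bound $\beta\le C\lambda$ coming from (\ref{who}) and (\ref{rkao}), and the transfer estimate (\ref{kin}), to produce the explicit lower bound $\|u\|_{L^\infty(\mathbb B_{r/\lambda}\cap\partial\Omega)}\ge \mathcal Re^{-C\lambda}(r/\lambda)^{C\lambda}$ with $\mathcal R>0$ controlled quantitatively by the annulus estimate (\ref{rkao}). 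The trade-off: the paper's version is a genuinely quantitative vanishing-order statement (a polynomial lower bound on the local sup norm with explicit dependence on $\lambda$) and needs no qualitative unique continuation input, since positivity of $\mathcal R$ is already quantitative; your version is softer and more modular --- it would work verbatim for any operator for which a doubling inequality with exponent $e^{C\lambda}$ and a weak boundary unique continuation property are known --- but it only yields the qualitative bound on the Taylor vanishing order, which is all Theorem~\ref{th2} as stated requires.
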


The outline of the paper is as follows. Section 2 is devoted to
obtaining the appropriate Carleman estimates for a general equation
derived from the Steklov eigenvalue problem. In Section 3, a
three-ball theorem is presented. Then the doubling estimates on
solid ball follow.  In section 4, we obtain the doubling estimates
on the boundary and the vanishing order of Steklov eigenfunction.
The letters $C$, $\bar C$, $C_0$ and $C_1$ denote generic positive
constants and do not depends on $u$. They may also vary from line to
line.

\section{Carleman estimates }
In order to obtain the doubling estimates of Steklov eigenfunctions
on the boundary of $\Omega$, it is not easy to consider the
eigenfunctions solely on $\partial\Omega$.  we first do a reflection
across the boundary. The argument follows from \cite{BL}. Set
$$\Omega_\rho:=\{ y\in \mathbb R^n| dist\{y, \partial\Omega\} <\rho\}.       $$
Define
$$\Omega'_\rho:=\{x+t\nu(x)|x\in\partial\Omega, \ t\in (0, \
\rho)\}=\{y\in\mathbb R^n|dist(y, \partial\Omega)<\rho\}\backslash
\overline{\Omega},$$ where $\nu(x)$ is the unit outer normal at $x$.
Since $\partial \Omega$ is compact and smooth, there exists a map
$(x, t)\to x+t\nu(x)$ which is one-to-one from
$\partial\Omega\times(-\delta ,\delta)$ onto $\delta$--neighborhood
of $\partial\Omega$, where $\delta$ depends only on $\Omega$. Set
$$\tilde{\Omega}=\Omega_\delta\cap\partial\Omega\cap\Omega'_\delta $$
and
$$\hat{u}(x)=u(x)\exp\{\lambda d(x)\}$$
where $d(x)=dist(x, \partial\Omega)$ is the distance function.
Notice that $\hat{u}(x)=u(x)$ on $\partial\Omega$. From the equation
$(\ref{ste})$, one can check that $\hat{u}$ satisfies the equation
\begin{equation}
\left \{ \begin{array}{lll} div(A(x)\nabla\hat{u})+b(x)\cdot\nabla\hat{u}+q(x)\hat{u}=0  &\quad \mbox{in}\ \Omega_\delta,
\medskip\\
\frac{\partial \hat{u}}{\partial \nu}=0 &\quad \mbox{on}\
\partial\Omega_\delta,
\end{array}
\right.
\end{equation}
with
\begin{equation}
\left \{ \begin{array}{lll} A(x)=I,  \medskip\\
b(x)=-2\lambda \nabla d(x),   \medskip\\
q(x)=\lambda^2-\lambda\triangle d(x).
\end{array}
\right. \label{fffw}
\end{equation}
We consider the reflection from $\Omega_\delta$ to $\Omega'_\delta$.
Let $x=y+t\nu(y)$ with $y\in\partial\Omega$ and $t\in(-\delta, \
0)$. The reflection map is given by $$\Psi(x)=y-t\nu(y).$$ For
$x'\in\Omega'_\delta$, there exists $x$ such that $\Psi(x)=x'$.
Define
$$\hat{u}(x')=u(\Psi^{-1}(x'))=\hat{u}(x).$$
Then $\hat{u}$ satisfies
$$ div(A(x) \nabla \hat{u})+b(x)\cdot\nabla \hat{u}+q(x)\hat{u}=0 \quad \mbox{in} \ \Omega'_\delta$$
with $A(x)=(a_{ij})^n_{i,j=1},$
\begin{equation}
\left \{ \begin{array}{lll}
a_{ij}(x')=\sum^{n}_{k=1}\frac{\partial\Psi^i}{\partial x_k}(x)
\frac{\partial\Psi^j}{\partial x_k}(x), \nonumber \medskip \\
b^i(x')=-\sum^{n}_{j=1}\frac{\partial}{\partial x'_j} a^{ij}(x')+\triangle \Psi^i(x)+b(x)\cdot\nabla\Psi^i(x),  \nonumber \medskip \\
q(x')=q(x).
\end{array}
\right.
\end{equation}
Therefore, $\hat{u}$ satisfies the following equation
\begin{equation} div(A \nabla \hat{u})+b\cdot\nabla
\hat{u}+q\hat{u}=0 \quad \mbox{in} \ \tilde{\Omega} \label{starg}
\end{equation}
with \begin{equation}
\left \{ \begin{array}{lll} \|A\|_{L^\infty(\tilde{\Omega})}\leq C,  \medskip\\
\|b\|_{W^{1, \infty}(\tilde{\Omega})}\leq C\lambda, \medskip\\
\|q\|_{W^{1, \infty}(\tilde{\Omega})}\leq C\lambda^2, \label{core}
\end{array}
\right.
\end{equation}
where $C$ depends only on $\Omega$. One can also check that $A$ is
uniformly Lipshitz and the equation (\ref{starg}) is uniformly
elliptic in $\tilde{\Omega}$ with ellipticity constant depending
only on $\Omega$. Instead of studying $u$ in (\ref{ste}), we
investigate $\hat{u}$ in (\ref{starg}), then we reduce $\hat{u}$ to
the boundary of $\Omega$ by making use of the fact that
$\hat{u}(x)=u(x)$ on $\partial\Omega$.

 General speaking, Carleman estimates
and frequency functions are two major ways to obtain doubling
estimates and study strong unique continuation. The frequency
function is a local measure of ``degree" of a polynomial like
function and first observed by Almgren for harmonic functions.
Garofalo and Lin in \cite{GL} showed it powerful application in the
strong unique continuation problem. The doubling property
(\ref{dou2}) in \cite{BL} is achieved by sophisticated use of
frequency functions. Carleman estimates are weighted integral
inequalities which were introduced by Carleman. See e.g. \cite{JK},
\cite{KT}, \cite{LT}, \cite{S}, to just mention a few literature on
strong unique continuation using Carleman estimates. Our goal in
this section is to obtain the carleman estimates for (\ref{starg}).
Since $A(x)$ does not depend on $\lambda$, we can instead consider
the general elliptic equation
\begin{equation} a_{ij}(x)D_{ij}\hat{u}+b(x)\cdot\nabla \hat{u}+q(x)\hat{u}=0
\quad \mbox{in} \ \tilde{\Omega} \label{obje}
\end{equation}
with the coefficients satisfying the same conditions as that in
(\ref{core}). The summation over $i, j$ is understood. In the
following, we adapt the idea in deriving Carleman estimates in
\cite{LW} and \cite{BC} which extend Donnelly and Fefferman's
approach \cite{DF} for classical eigenfunctions.

To begin, we introduce polar coordinates in $\mathbb R^n\backslash
\{0\}$ by setting $x=r\omega$, with $r=|x|$ and
$\omega=(\omega_1,\cdots,\omega_n)\in S^{n-1}$. Moreover, we use a
new coordinate $t=\log r$. Then
$$ \frac{\partial }{\partial x_j}=e^{-t}(\omega_j\partial_t+  \Omega_j), \quad 1\leq j\leq n, $$
where $\Omega_j$ is a vector field in $S^{n-1}$. We also see that
the vector fields $\Omega_j$ satisfy
$$ \sum^{n}_{j=1}\omega_j\Omega_j=0 \quad \mbox{and} \quad
\sum^{n}_{j=1}\Omega_j\omega_j=n-1.$$ Since we are only interested
the local estimates of $\hat{u}$ and $r\to 0$ if and only if
$t\to-\infty$, we study the case that $t$ sufficiently closes to
$-\infty$. One can also see that
$$\frac{\partial^2}{\partial x_i\partial x_j}=e^{-2t}(\omega_i\partial_t-\omega_i+\Omega_i)(\omega_j\partial_t+\Omega_j),
\ \ 1\leq i\leq j\leq n.     $$ Note that
$$e^{2t}\triangle=\partial^2_t+(n-2)\partial_t+\triangle_\omega,         $$
where $\triangle_\omega=\sum_j\Omega^2_j$ is the Laplace-Beltrami
operator on $S^{n-1}$. We want to establish the Carleman estimates
in the neighborhood of $x\in\partial\Omega$. From (\ref{fffw}),  we
know
$$ a_{ij}(x)=\delta_{ij}+O(e^t), \ \ \mbox{as} \ t\to  -\infty.$$

Next we obtain the Carleman estimates with weight
$$\phi_\beta(x)=e^{-\beta\tilde{\phi}(x)}$$ with $\beta>0$ and
$$\tilde{\phi}(x)=\log|x|+\log((\log|x|)^2).$$ Since we have introduced
the new coordinate concerning $t$, by denoting $$\phi(t)=t+\log
t^2,$$ then $\tilde{\phi}(x)=\phi(\log|x|)$. We consider the local
estimates of any smooth supported function $u$ at
$x_0\in\partial\Omega$. Let $r(x)=|x-x_0|$.  Our Carleman estimate
concerning the general elliptic equations (\ref{starg}) is given as
follows.
\begin{proposition}
There exist $C_1$, $C_0$ and sufficiently small $r_0$ depending on
$\Omega$ such that for $u\in C^{\infty}_{0}(\mathbb
B_{r_0}(x_0)\backslash \{x_0\}),$ $x_0\in\partial\Omega$ and
$$\beta>C_1(1+\|b\|_{W^{1, \infty}}+{\|q\|}^{1/2}_{W^{1, \infty}}),$$
one has
\begin{eqnarray}
C_0\|\phi_\beta(a_{ij}D_{ij}u+b\cdot\nabla
u+qu)r^{\frac{4-n}{2}}\|^2 &\geq& \beta^3\|\phi_\beta {(\log
r)}^{-1}
u r^{-\frac{n}{2}}\|^2 \nonumber \medskip \\
&+&\beta\|\phi_\beta {(\log r)}^{-1}\nabla u r^{\frac{2-n}{2}}\|^2.
\label{carl} \end{eqnarray} \label{esti}
\end{proposition}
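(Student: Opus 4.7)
The plan is to prove (\ref{carl}) by the classical method of conjugation. First I would pass to polar variables $(t,\omega)$ with $t=\log r$ and make the preliminary substitution $u=r^{(2-n)/2}v$, chosen to kill the $(n-2)\partial_t$ term coming from the identity $e^{2t}\triangle=\partial_t^2+(n-2)\partial_t+\triangle_\omega$ stated just above the proposition. The principal part then reduces to $\partial_t^2+\triangle_\omega-\tfrac{(n-2)^2}{4}$ plus an $O(e^t)$ error from $a_{ij}-\delta_{ij}$. The $r$-weights $r^{(4-n)/2}$, $r^{-n/2}$, $r^{(2-n)/2}$ in the statement are precisely what turn $L^2(dx)$ norms into $L^2(dt\,d\omega)$ norms via $dx=e^{nt}dt\,d\omega$, so the estimate for $v$ in $L^2(dt\,d\omega)$ is equivalent to the weighted estimate in (\ref{carl}).

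Conjugating the model operator by $e^{\beta\phi(t)}$ with $\phi(t)=t+\log t^2$ and splitting into formally self-adjoint and skew-adjoint parts in $L^2(dt\,d\omega)$ gives
\[
S=\partial_t^2+\triangle_\omega-\tfrac{(n-2)^2}{4}+\beta^2(\phi')^2,\qquad A=\beta\bigl(\phi'\partial_t+\partial_t\phi'\bigr).
\]
Expanding $\|(S+A)v\|^2=\|Sv\|^2+\|Av\|^2+\bigl([S,A]v,v\bigr)$ reduces the matter to positivity of the commutator. A direct calculation yields
\[
[S,A]=-4\beta^3(\phi')^2\phi''+\beta\bigl(4\phi''\partial_t^2+4\phi'''\partial_t+\phi''''\bigr).
\]
Since $\phi'=1+2/t$ and $\phi''=-2/t^2$, the dominant term $-4\beta^3(\phi')^2\phi''$ is positive as $t\to-\infty$, and pairing with $v$ produces $\bigl([S,A]v,v\bigr)\ge c\beta^3\int(\log r)^{-2}v^2\,dt\,d\omega$, which after undoing the substitutions is exactly the first summand on the right of (\ref{carl}).

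The gradient term I would recover by using $\|Av\|^2$ to control $\beta^2\|\phi'\partial_t v\|^2$ up to lower order, and coupling this with the $\triangle_\omega$-part of $S$ through integration by parts in $\omega$ to capture the spherical gradient; trading one factor of $\beta$ in the commutator argument yields the bound $\beta\|(\log r)^{-1}\nabla_{t,\omega}v\|^2$, which translates back via $u=r^{(2-n)/2}v$ to the second summand. Once the estimate holds for the constant-coefficient model, the full equation is handled by absorption: $a_{ij}-\delta_{ij}=O(r)$ contributes $r$ times second-order terms, absorbed by $\|Sv\|^2$ once $r_0$ is small; the drift $b\cdot\nabla u$ is absorbed into the gradient term once $\beta\gtrsim\|b\|_{W^{1,\infty}}$; and $qu$ is absorbed into the $\beta^3$ term once $\beta^2\gtrsim\|q\|_{W^{1,\infty}}$, matching the hypothesis on $\beta$.

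The main obstacle I anticipate is the bookkeeping in the commutator. The lower-order piece $\beta\bigl(4\phi''\partial_t^2+4\phi'''\partial_t+\phi''''\bigr)$ must be absorbed into $\|Sv\|^2$ and $\|Av\|^2$ without eating the positive $\beta^3$ contribution, and this is precisely where the weight $\phi(t)=t+\log t^2$ does its work: the $\log t^2$ correction produces the $(\log r)^{-2}$ factor that survives the absorption and delivers the optimal $\beta^3$ dependence. Verifying that cross-terms between $\partial_t^2$ and $\triangle_\omega$ inside $\|Sv\|^2$ yield only nonnegative (or harmless boundary) contributions after integration by parts on $S^{n-1}$ is another delicate but standard point.
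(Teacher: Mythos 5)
Your proposal follows essentially the same route as the paper: pass to the coordinate $t=\log r$, conjugate by $e^{\beta\phi(t)}$ with $\phi(t)=t+\log t^2$, and extract the $\beta^3(\log r)^{-2}$ gain from the sign of $\phi''=-2t^{-2}$ via the cross term between the symmetric and antisymmetric parts (the paper's $\mathcal{A}_3$, together with its $\mathcal{A}_1\geq \frac{\tilde\epsilon}{\beta}\mathcal{A}_1'$ device for the spherical gradient, is exactly your commutator computation and your pairing of $Sv$ against $|\phi''|v$ after integration by parts), with $a_{ij}-\delta_{ij}=O(e^t)$, $b$ and $q$ absorbed under the stated lower bound on $\beta$. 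The only differences are cosmetic, e.g.\ your preliminary normalization $u=r^{(2-n)/2}v$ removes the $(n-2)\partial_t$ term that the paper instead carries along in its error term $\mathcal{B}$.
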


Based on the proof of proposition \ref{esti}, if $u$ has support
away from $x_0$, we will have the following corollary which is
useful in deriving the three-ball theorem and doubling estimates.

\begin{corollary} Besides the conditions in proposition \ref{esti},
assume further that
$$ supp \ u\subset \{x_0\in\partial\Omega, \  |x-x_0|\geq \tilde{\delta}>0 \},$$
we have
\begin{eqnarray}
C\|\phi_\beta(a_{ij}D_{ij}u+b\cdot\nabla
u+qu)r^{\frac{4-n}{2}}\|^2&\geq& \beta^3\|\phi_\beta {(\log r)}^{-1}
u r^{-\frac{n}{2}}\|^2+\beta\tilde{\delta}
\|\phi_\beta u r^{-\frac{1+n}{2}}\|^2 \nonumber\\
&+&\beta\|\phi_\beta {(\log r)}^{-1}\nabla u r^{\frac{2-n}{2}}\|^2.
\label{usf}
\end{eqnarray}
\label{cor2}
\end{corollary}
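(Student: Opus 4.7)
My plan is to derive Corollary \ref{cor2} as an immediate consequence of Proposition \ref{esti} together with the additional support hypothesis $\mathrm{supp}\, u\subset \{|x-x_0|\geq \tilde{\delta}\}$. The idea is that the new lower bound $\beta\tilde{\delta}\|\phi_\beta u\, r^{-(1+n)/2}\|^2$ can be read off from the first lower bound $\beta^3\|\phi_\beta (\log r)^{-1} u\, r^{-n/2}\|^2$ already on the right-hand side of Proposition \ref{esti}, once we exploit that on $\mathrm{supp}\, u$ the weight $r$ is bounded below by $\tilde{\delta}$ and (after shrinking $r_0<1$) $|\log r|$ is bounded above by $|\log\tilde{\delta}|$.

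The first step is the pointwise estimate $r^{-(n+1)}=r^{-1}\cdot r^{-n}\leq \tilde{\delta}^{-1}r^{-n}$ on $\mathrm{supp}\, u$, which after integration yields
\[
\beta\tilde{\delta}\,\|\phi_\beta u\, r^{-(n+1)/2}\|^2 \;\leq\; \beta\,\|\phi_\beta u\, r^{-n/2}\|^2.
\]
The second step is the lower bound $(\log r)^{-2}\geq |\log\tilde{\delta}|^{-2}$ on $\mathrm{supp}\, u$, which gives
\[
\beta\,\|\phi_\beta u\, r^{-n/2}\|^2 \;\leq\; \frac{|\log\tilde{\delta}|^{2}}{\beta^{2}}\cdot \beta^{3}\,\|\phi_\beta(\log r)^{-1} u\, r^{-n/2}\|^2.
\]
Chaining the two, the extra term is controlled by a constant multiple of $\beta^{3}\|\phi_\beta(\log r)^{-1}u\, r^{-n/2}\|^2$ whenever $\beta\gtrsim |\log\tilde{\delta}|$; since Proposition \ref{esti} already requires $\beta\gtrsim 1+\|b\|_{W^{1,\infty}}+\|q\|^{1/2}_{W^{1,\infty}}\gtrsim \lambda$, this either holds automatically for $\lambda$ large or can be enforced by absorbing $|\log\tilde{\delta}|$ into the constant $C_1$.

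Once this is done, I add the new term to the right-hand side of Proposition \ref{esti} and push the resulting constant factor into the prefactor $C$; the gradient term $\beta\|\phi_\beta(\log r)^{-1}\nabla u\, r^{(2-n)/2}\|^2$ plays no role in the argument and survives intact, giving the three-term estimate of Corollary \ref{cor2}. The main (and essentially only) subtle point I see is that the new constant $C$ will depend on $\tilde{\delta}$ through $|\log\tilde{\delta}|^{2}$; this is harmless for later use, because in the three-ball theorem of Section 3 the parameter $\tilde{\delta}$ will be a small but fixed geometric constant determined by $\Omega$, so the resulting $\tilde{\delta}$-dependence can be safely folded into the generic constants $C$ used throughout the paper.
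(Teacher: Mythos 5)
Your two pointwise estimates are individually correct, but the argument they produce is not the corollary as stated, and the gap is fatal for the way the corollary is used later. What you prove is that the new term $\beta\tilde{\delta}\|\phi_\beta u\,r^{-(1+n)/2}\|^2$ is dominated by $\frac{|\log\tilde{\delta}|^2}{\beta^2}\,\beta^3\|\phi_\beta(\log r)^{-1}u\,r^{-n/2}\|^2$, i.e.\ by a term already on the right-hand side of Proposition \ref{esti}. That makes your version of the corollary logically \emph{equivalent} to Proposition \ref{esti} (up to constants) rather than a strengthening of it: it carries no new information, and it only closes under the extra hypothesis $\beta\gtrsim|\log\tilde{\delta}|$ (or, equivalently, with a constant blowing up like $|\log\tilde{\delta}|^2$). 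Your claim that this is harmless because ``$\tilde{\delta}$ will be a small but fixed geometric constant'' is not what happens in the paper: in the proof of Proposition \ref{pro2} the corollary is applied to $\psi\hat u$ with $\tilde{\delta}=\rho$, the \emph{inner} radius of the annulus supporting the cutoff, and $\rho$ is precisely the doubling radius, which is sent to $0$ (and must be, for Theorem \ref{th2} on the vanishing order). On the inner annulus $A_{3\rho/2,4\rho}$ one has $(\log r)^{-2}\sim(\log\rho)^{-2}\to 0$, so the $\beta^3(\log r)^{-2}$ weight degenerates there and cannot produce the bound (\ref{boo}); the whole point of the extra term is that $\beta\tilde{\delta}\,r^{-1}\geq\beta/4$ on that annulus \emph{uniformly in} $\rho$. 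With your derivation the requirement $\beta\gtrsim|\log\rho|$ would conflict with the bound $\beta\leq C\lambda$ coming from (\ref{who})--(\ref{rkao}), destroying both the $\rho$-uniform doubling constant and the finite vanishing order.

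The paper obtains the extra term from a different source: not from the zeroth-order term $\beta^3\|\cdot\|^2$ but from the \emph{gradient} term. In the coordinate $t=\log r$ one has the weighted Poincar\'e-type identity $\int\partial_t(v^2)e^{-t}=\int v^2e^{-t}$ together with Cauchy--Schwarz, giving $\int v^2e^{-t}\leq C\int(\partial_t v)^2e^{-t}$; the support condition is then used only to bound the weight on the \emph{right}, $e^{-t}=r^{-1}\leq\tilde{\delta}^{-1}$, yielding $C\tilde{\delta}\int v^2e^{-t}\leq\int(\partial_t v)^2$, which is fed into the term $C\beta\|\partial_t v\|^2$ in the lower bound (\ref{starr}) for $\mathcal{A}$. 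This produces the term $\beta\tilde{\delta}\|v e^{-t/2}\|^2$ with a constant genuinely independent of $\tilde{\delta}$ and with no extra largeness condition on $\beta$. To repair your proof you would need to replace your two-step absorption by an argument of this Hardy/Poincar\'e type that trades a derivative for the singular weight $r^{-1}$.
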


\begin{proof}[Proof of Proposition 1]

 Define the differential operators
$P=a_{ij}D_{ij}$ and $Q=b(x)\cdot\nabla$. Let
$$u=e^{\beta\phi(t)}v.$$ Set
$$ P_{\beta} v=e^{-\beta\phi(t)}P(e^{\beta\phi(t)}v), $$
$$Q_{\beta} v=e^{-\beta\phi(t)}Q(e^{\beta\phi(t)}v).$$
Then
\begin{equation}
\begin{array}{rll}
L_\beta v:=& e^{2t}P_{\beta}v+e^{2t}Q_{\beta}v+e^{2t}qv \nonumber \medskip\\
=&\partial_t^2 v+a v+\hat{a} \partial_t v+\triangle_\omega
v+e^{2t}qv+S(v)+R(v), \end{array}
\end{equation}
where
\begin{equation}
\left\{\begin{array}{rll}
a &=&\beta\phi^{''}+\beta^2(\phi')^2+(n-2)\beta \phi' \nonumber \medskip\\
&=&(1+2t^{-1})^2\beta^2+(n-2)\beta+2(n-2)t^{-1}\beta-2t^{-2}\beta, \nonumber \medskip\\
\hat{a} &=& 2\beta\phi'+(n-2) \nonumber \medskip\\
&=&2\beta+4\beta t^{-1}+(n-2),  \nonumber \medskip\\
S(v)&=&\sum_{j+|\alpha|\leq 2}C_{j\alpha}(t,
\omega)\partial_t^j\Omega^{\alpha}v+\bar
C(t,\omega)(\beta(\phi')^2+\beta\phi'')v+e^tb_i\omega_i\beta\phi'v, \nonumber \medskip\\
R(v)&=&\sum_{j+|\alpha|\leq 1}C_{j\alpha}(t, \omega)\beta
\phi'\partial_t^j\Omega^{\alpha}v+e^tb_i\omega_i\partial_t
v+e^tb_i\Omega_iv, \nonumber \medskip\\
C_{j\alpha}&=&O(e^t), \ \ \partial_tC_{j\alpha}=O(e^t), \ \
\Omega^\alpha
C_{j\alpha}=O(e^t), \nonumber \medskip\\
\bar C&=&O(e^t), \ \ \partial_t\bar C=O(e^t), \ \ \Omega^\alpha
\bar C=O(e^t). \nonumber \medskip\\

\end{array}
\right.
\end{equation}
In order to show (\ref{carl}) for $t$ close to $-\infty$, we only
need to prove the following
\begin{eqnarray}
C\int|e^{2t} P_\beta v+e^{2t}Q_\beta v+e^{2t}q v|^2 \,dtd\omega
&\geq& \beta\int t^{-2}|\partial_t v|^2
\,dtd\omega+\beta\sum_{j}\int
t^{-2}|\Omega_j v|^2\,dtd\omega  \nonumber \medskip\\
&+&\beta^3\int t^{-2} v^2 \,dtd\omega. \label{eqv}
\end{eqnarray}
By the triangle inequality,
\begin{equation}
\|L_\beta v\|^2\geq \frac{1}{2}\mathcal {A}-\mathcal{B}, \label{new}
\end{equation}
where \begin{eqnarray} \mathcal {A}=\|\partial_t^2 v
&+&\triangle_\omega v+ \sum_{j+|\alpha|= 2}C_{j\alpha}(t,
\omega)\partial_t^j\Omega^{\alpha}v +(2\beta\phi'+e^t
b_i\omega_i)\partial_t v+e^tb_i\Omega_i v \nonumber
\\ &+& (\beta^2(\phi')^2+(n-2)\beta\phi'+
e^t\beta\phi'b_i\omega_i+e^{2t}q) v \|^2 \nonumber
\end{eqnarray}
and
\begin{eqnarray}
\mathcal{B}=\|\beta\phi'' v&+&(n-2)\partial_t v+\sum_{j+|\alpha|\leq
1}C_{j\alpha}(t, \omega)\partial_t^j\Omega^{\alpha}v \nonumber
\\ &+&\sum_{j+|\alpha|\leq 1}C_{j\alpha}(t,
\omega)\beta\phi'\partial_t^j\Omega^{\alpha}v +\bar
C(t,\omega)(\beta(\phi')^2+\beta\phi'')v \|^2.
\end{eqnarray}

We first study $\mathcal {A}$. While $\mathcal {B}$ will be
incorporated into the leading term in $\mathcal {A}$. We decompose
$\mathcal {A}$ as
$$ \mathcal {A}=\mathcal {A}_1+\mathcal {A}_2+\mathcal {A}_3,$$
where
\begin{eqnarray}
\mathcal {A}_1=\|\partial_t^2 v &+&\triangle_\omega v+
\sum_{j+|\alpha|= 2}C_{j\alpha}(t,
\omega)\partial_t^j\Omega^{\alpha}v \nonumber
\\&+&(\beta^2(\phi')^2+(n-2)\beta\phi'+
e^t\beta\phi'b_i\omega_i+e^{2t}q) v \|^2, \nonumber
\end{eqnarray}
\begin{eqnarray} \mathcal {A}_2 =\| (2\beta\phi'+e^t b_i\omega_i)\partial_t
v+e^tb_i\Omega_i v \|^2, \nonumber \end{eqnarray}
\begin{eqnarray}
\mathcal {A}_3=2<\partial_t^2 v+\triangle_\omega v+
\sum_{j+|\alpha|= 2}C_{j\alpha}(t,
\omega)\partial_t^j\Omega^{\alpha}v
 (\beta^2(\phi')^2+(n-2)\beta\phi'+
e^t\beta\phi'b_i\omega_i+e^{2t}q) v \nonumber \\
(2\beta\phi'+e^t b_i\omega_i)\partial_t v+e^tb_i\Omega_i v>.
\nonumber
\end{eqnarray}
Let us calculate $\mathcal {A}_1$. Since $|\phi''|\leq 1$ as $t$ is
negatively large, \begin{equation} \mathcal {A}_1\geq
\frac{\tilde{\epsilon}}{\beta}\mathcal {A}_1'
\label{reg}\end{equation} with $0<\tilde{\epsilon}<1$ to be
determined, where
\begin{eqnarray} \mathcal
{A}_1'=\|\sqrt{|\phi''|}\Big( \partial_t^2 v &+&\triangle_\omega v+
\sum_{j+|\alpha|= 2}C_{j\alpha}(t,
\omega)\partial_t^j\Omega^{\alpha}v \nonumber
\\ &+&(\beta^2(\phi')^2+(n-2)\beta\phi'+
e^t\beta\phi'b_i\omega_i+e^{2t}q) v \Big)\|^2.
\end{eqnarray}
We split $\mathcal {A}_1'$ into three parts. $$ \mathcal
{A}_1'=\mathcal {J}_1+\mathcal {J}_2+\mathcal {J}_3,$$ where
\begin{eqnarray}
\mathcal {J}_1=\|\sqrt{|\phi''|}\big(\partial_t^2 v
+\triangle_\omega v+ \sum_{j+|\alpha|= 2}C_{j\alpha}(t,
\omega)\partial_t^j\Omega^{\alpha}v\big)\|^2, \nonumber
\end{eqnarray}
\begin{eqnarray}
\mathcal
{J}_2=\|\sqrt{|\phi''|}\big(\beta^2(\phi')^2+(n-2)\beta\phi'+
e^t\beta\phi'b_i\omega_i+e^{2t}q \big)v\|^2, \nonumber
\end{eqnarray}
\begin{eqnarray}
\mathcal {J}_3=2<\partial_t^2 v &+&\triangle_\omega v+
\sum_{j+|\alpha|= 2}C_{j\alpha}(t,
\omega)\partial_t^j\Omega^{\alpha}v \nonumber \\
&&|\phi''|\big(\beta^2(\phi')^2+(n-2)\beta\phi'+
e^t\beta\phi'b_i\omega_i+e^{2t}q \big)v>. \nonumber \end{eqnarray}
Note that $\mathcal {J}_1$ is nonnegative. Since $\phi'$ is close to
$1$ as $t\to -\infty$, by the triangle inequality,
\begin{eqnarray}
\mathcal {J}_2\geq
\beta^4\|\sqrt{|\phi''|}v\|^2-\|\sqrt{|\phi''|}\big(
(n-2)\beta\phi'+ e^t\beta\phi'b_i\omega_i+e^{2t}q\big)v\|^2.
\nonumber
\end{eqnarray}
Since we have chose that $$\beta>C_1(1+\|b\|_{W^{1,
\infty}}+{\|q\|}^{1/2}_{W^{1, \infty}}),$$ by Young's inequality,
\begin{eqnarray}
\|\sqrt{|\phi''|}\big( (n-2)\beta\phi'+
e^t\beta\phi'b_i\omega_i+e^{2t}q\big)v\|^2\leq \epsilon \beta^4
\|\sqrt{|\phi''|}v\|^2+C_\epsilon\|\sqrt{|\phi''|}v\|^2. \nonumber
\end{eqnarray}
Choosing $\epsilon$ appropriately small, we have
\begin{eqnarray}
\mathcal {J}_2&\geq& C\beta^4 \|\sqrt{|\phi''|}v\|^2-C\beta^2
\|\sqrt{|\phi''|}v\|^2  \ \nonumber \medskip\\
&\geq& C\beta^4 \|\sqrt{|\phi''|}v\|^2. \label{reg1}
\end{eqnarray}
We find a lower bound for $\mathcal {J}_3$. Using integration by
parts, it follows that
\begin{eqnarray}
\mathcal {J}_3&=&-2\int
|\phi''|\big(\beta^2(\phi')^2+(n-2)\beta\phi'+
e^t\beta\phi'b_i\omega_i+e^{2t}q \big) |\partial_t v|^2 \nonumber \\
&&-2\int
\partial_t\Big(|\phi''|\big(\beta^2(\phi')^2+(n-2)\beta\phi'+
e^t\beta\phi'b_i\omega_i+e^{2t}q \big)\Big)v\partial_t v \nonumber \\
&&-2 \sum_{j=1}^n\int |\phi''|\big(\beta^2(\phi')^2+(n-2)\beta\phi'+
e^t\beta\phi'b_i\omega_i+e^{2t}q\big)|\Omega_j v|^2 \nonumber \\
&&-2 \int |\phi''|\Omega_j\big(\beta^2(\phi')^2+(n-2)\beta\phi'+
e^t\beta\phi'b_i\omega_i+e^{2t}q\big)\Omega_j v v \nonumber \\
&&-2\int \sum_{j+|\alpha|= 1}C_{j\alpha}(t,
\omega)\partial_t^j\Omega^{\alpha}v \sum_{j+|\alpha|= 1}\Big(\big(
|\phi''|\beta^2(\phi')^2+(n-2)\beta\phi'+
e^t\beta\phi'b_i\omega_i+e^{2t}q \big)v\Big)\nonumber \\
&&\geq -C\beta^2\int |\phi''|(|\partial_t v|^2+\sum^n_{j=1}|\Omega_j
v|^2+|v|^2). \nonumber \end{eqnarray} Combining above estimates for
$\mathcal {J}_2$, $\mathcal {J}_3$ and (\ref{reg}) together, it
follows that
\begin{equation}
 \mathcal {A}_1\geq C\beta^3\tilde{\epsilon}\int |\phi''|v^2 -C\beta
\tilde{\epsilon}\int |\phi''|(|\partial_t v|^2+\sum^n_{j=1}|\Omega_j
v|^2+|v|^2). \label{small}
\end{equation}
Now we compute $\mathcal {A}_2$. Obviously, $$\mathcal {A}_2\geq
\frac{1}{\beta}\mathcal {A}_2.$$ By the triangle inequality,

$$\mathcal {A}_2\geq 4 \beta^2 \|\phi'\partial_t v||^2 -\|e^t b_i\omega_i\partial_t
v+e^tb_i\Omega_i v \|^2.$$ Application of Young's inequality yields

$$\|e^t b_i\omega_i\partial_t v+e^tb_i\Omega_i v \|^2\leq \epsilon \beta^2\|e^t \partial_t \|^2+
C_\epsilon \sum^n_{j=1}\beta^2
\|e^t\Omega_j v\|^2.$$ Note that $1+\frac{2}{T_0}\leq \phi'\leq 1$
as $-\infty\leq t<0$, which is close to $1$ as $T_0$ as negatively
large. It follows that \begin{equation}\mathcal {A}_2\geq
C\beta\|\partial_t v\|^2-C\beta\sum^n_{j=1}\|e^t\Omega_j v\|^2.
\label{regg}
\end{equation}
The calculation of $\mathcal {A}_3$ is lengthy. Using integration by
parts and the fact that
$$\lim_{t\to-\infty}-\frac{\phi''}{e^t}=\infty,$$
we have
\begin{eqnarray}
\mathcal {A}_3\geq C\beta\sum^n_{j=1}\int |\phi''||\Omega_j v|^2&-&
C\beta^3\int  |\phi''|e^tv^2-C\beta \int |\phi''||\partial_t v|^2
\nonumber \\ &-&C\beta^3\int e^t|v|^2. \label{smal}
\end{eqnarray}
Recall that $\mathcal {A}=\mathcal {A}_1+\mathcal {A}_2+\mathcal
{A}_3$. Together with the estimates (\ref{small}), (\ref{regg}) and
(\ref{smal}), by choosing $\tilde{\epsilon}$ appropriately small, we
can derive that
\begin{equation}
\mathcal {A}\geq C\beta\|\partial_t
v\|^2+C\beta^3\|\sqrt{|\phi''|}v|^2+C\beta\sum^n_{j=1}\|\sqrt{|\phi''|}|\Omega_jv\|^2.
\label{starr}
\end{equation}
Finally we estimate $\mathcal {B}$. By integration by parts,
\begin{eqnarray}
\mathcal{B}=\|\beta\phi'' v &+&(n-2)\partial_t
v+\sum_{j+|\alpha|\leq 1}C_{j\alpha}(t,
\omega)\partial_t^j\Omega^{\alpha}v \nonumber
\\ &+&\sum_{j+|\alpha|\leq 1}C_{j\alpha}(t,
\omega)\beta\phi'\partial_t^j\Omega^{\alpha}v +\bar
C(t,\omega)(\beta(\phi')^2+\beta\phi'')v \|^2 \nonumber \\
&&\leq C\big(\beta^2\|\sqrt{|\phi''|} v\|^2+\|\partial_t
v\|^2+\|e^t\partial_t v\|^2+ \sum^n_{j=1}\|e^t\Omega_j u\|^2\big).
\end{eqnarray}
We can check that each term in $\mathcal{B}$ can be incorporated in
the right hand side of (\ref{starr}). Therefore,
\begin{equation}
\|L_\beta v\|^2 \geq C\beta\|\sqrt{|\phi''|}\partial_t
v\|^2+C\beta^3\|\sqrt{|\phi''|}v\|^2+C\beta\sum^n_{j=1}\|\sqrt{|\phi''|}\Omega_jv\|^2.
\end{equation}
We are done with the proof of Proposition 1.
\end{proof}

The proof the corollary is inspired by the idea in \cite{BC}.

\begin{proof}[proof of Corollary \ref{cor2}]
The application of Cauchy-Schwartz inequality yields
\begin{eqnarray}
\int \partial_t (v^2)e^{-t}&=&2\int v\partial_t e^{-t} \nonumber \\
&\leq &2\big(\int (\partial_t v)^2
e^{-t}\big)^{\frac{1}{2}}\big(\int  v)^2 e^{-t}\big)^{\frac{1}{2}}.
\end{eqnarray}
On the other hand, by integration by parts,
\begin{equation}
\int \partial_t (v^2) e^{-t} =\int v^2 e^{-t}.
\end{equation}
Hence \begin{equation} \int v^2 e^{-t}\leq C\int (\partial_t v)^2
e^{-t}. \nonumber
\end{equation}
Since $u$ is supported in $\{x| \ \ |x-x_0|\geq \tilde{\delta}\}$,
\begin{equation} C\tilde{\delta} \int v^2 e^{-t}\leq \int (\partial_t
v)^2.
\end{equation}
Applying above estimates to (\ref{starr}), we know an another
refined lower bound for $\mathcal{A}$, that is,
$$ C\mathcal{A}\geq\tilde{\delta}\beta\int v^2 e^{-t}.$$
Carrying out the similar proof as Proposition 1, we know that
\begin{eqnarray}
\|L_\beta v\|^2 \geq C\beta \|\sqrt{|\phi''|}\partial_t
v\|^2&+&C\beta^3\|\sqrt{|\phi''|}v\|^2+C\beta\sum^n_{j=1}\|\sqrt{|\phi''|}\Omega_jv\|^2
\nonumber \\
&+&\tilde{\delta}\beta\| v e^{-t/2}\|^2,
\end{eqnarray}
which implies the corollary.
\end{proof}

\section{ Doubling estimates on solid balls}
Following from the similar argument in \cite{BC}, we have the
following three-ball theorem. Its argument is also similar to the
proof of Proposition 2 below.

\begin{lemma}
There exist constants $r_0,$ $C$ and $0<\gamma<1$ depending only on
$\Omega$ such that for any solutions of (\ref{obje}),
$0<R<r_0<\delta$, and $x_0\in\partial\Omega$, one has
\begin{equation}
\int_{\mathbb B_R(x_0)}{\hat u}^2\leq e^{C(1+\|b\|_{W^{1,
\infty}}+{\|q\|}^{1/2}_{W^{1, \infty}})}(\int_{\mathbb
B_{2R}(x_0)}{\hat u}^2)^{1-\gamma}(\int_{\mathbb B_{R/2}(x_0)}{\hat
u}^2)^{\gamma}.
\end{equation}
\label{three}
\end{lemma}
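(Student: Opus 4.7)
The plan is to apply the Carleman estimate of Corollary \ref{cor2} to a smooth cutoff of $\hat u$, and then turn the resulting weighted inequality into a three-ball bound by optimizing in the parameter $\beta$, in the spirit of the Hadamard three-circle theorem.

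Concretely, I would fix radii $r_0<r_1<r_1'<R/2<R<r_2'<r_2<r_3<2R$, all comparable to $R$, and a smooth cutoff $\eta$ with $\eta\equiv 1$ on $\mathbb B_{r_2}(x_0)\setminus\mathbb B_{r_1}(x_0)$, support contained in $\mathbb B_{r_3}(x_0)\setminus\mathbb B_{r_0}(x_0)$, and standard bounds $|\nabla^k\eta|\leq CR^{-k}$. Applying Corollary \ref{cor2} to $v=\eta\hat u$ (with $\tilde\delta\sim R$) and using that $\hat u$ solves \eqref{obje}, the elliptic operator applied to $v$ reduces to the commutator with $\eta$ acting on $\hat u$; this commutator is supported on the two transition shells $\mathbb B_{r_1}\setminus\mathbb B_{r_0}$ and $\mathbb B_{r_3}\setminus\mathbb B_{r_2}$, and is pointwise bounded by $CR^{-2}|\hat u|+CR^{-1}|\nabla\hat u|$ (with an extra factor of $\|b\|_{L^\infty}$ from the $b\cdot\nabla\eta$ term).

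Next I would restrict the right-hand side of the Carleman inequality (the lower bound containing $v$) to the sub-annulus $\mathbb B_{r_2'}(x_0)\setminus\mathbb B_{r_1'}(x_0)\subset\{\eta=1\}$. Since $\tilde\phi$ is increasing in $r$ for small $r$, the weight $\phi_\beta=e^{-\beta\tilde\phi}$ is decreasing in $|x-x_0|$, so on this set $\phi_\beta\geq\phi_\beta(r_2')$. On the left-hand side I would bound $\phi_\beta$ on each shell by its maximum, namely $\phi_\beta(r_0)$ on the inner shell and $\phi_\beta(r_2)$ on the outer shell, and then invoke a Caccioppoli estimate for \eqref{obje} to replace the gradient contribution from the commutator by $L^2$ norms of $\hat u$ on the enlarged balls $\mathbb B_{R/2}(x_0)$ and $\mathbb B_{2R}(x_0)$. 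After dividing through by $\phi_\beta(r_2')^2$, the net inequality takes the form
\begin{equation*}
M\;\leq\;C\,e^{2c_1\beta}A\;+\;C\,e^{-2c_2\beta}B,
\end{equation*}
where $M=\int_{\mathbb B_{r_2'}\setminus\mathbb B_{r_1'}}\hat u^2$, $A=\int_{\mathbb B_{R/2}}\hat u^2$, $B=\int_{\mathbb B_{2R}}\hat u^2$, and the exponents $c_1=\tilde\phi(r_2')-\tilde\phi(r_0)>0$, $c_2=\tilde\phi(r_2)-\tilde\phi(r_2')>0$ depend only on $\Omega$.

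Finally I would optimize in $\beta$: setting $\beta_\star:=\frac{1}{2(c_1+c_2)}\log(B/A)$ balances the two terms, so provided $\beta_\star$ exceeds the Carleman threshold $\beta_0=C_1(1+\|b\|_{W^{1,\infty}}+\|q\|^{1/2}_{W^{1,\infty}})$, taking $\beta=\beta_\star$ yields $M\leq CA^{\gamma}B^{1-\gamma}$ with $\gamma=c_2/(c_1+c_2)\in(0,1)$; otherwise $\beta_\star<\beta_0$ forces $B/A\leq e^{2(c_1+c_2)\beta_0}$ and plugging $\beta=\beta_0$ into the inequality recovers the same bound with the prefactor $e^{C(1+\|b\|_{W^{1,\infty}}+\|q\|^{1/2}_{W^{1,\infty}})}$ claimed in the lemma. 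Adding back the inner ball via $\int_{\mathbb B_R}\hat u^2\leq M+\int_{\mathbb B_{r_1'}}\hat u^2\leq M+A$ and using $A\leq B$ completes the argument. The main anticipated obstacle is the careful bookkeeping of the polynomial factors $r^{(4-n)/2}$, $r^{-n/2}$, $(\log r)^{-1}$, the power $\beta^3$, and the Caccioppoli constants, so that they all absorb into the single exponential prefactor without contaminating the exponential contrast between $\phi_\beta(r_0)$ and $\phi_\beta(r_2)$ at the sub-annular scale, which is what makes the three-ball convexity work.
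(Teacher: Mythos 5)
Your proposal is correct and is essentially the paper's intended argument: the paper gives no separate proof of Lemma \ref{three}, deferring to \cite{BC} and to the proof of Proposition \ref{pro2}, which is exactly your scheme — cut off $\hat u$ on an annulus, apply the Carleman estimate of Proposition \ref{esti}/Corollary \ref{cor2}, control the commutator on the two transition shells via Caccioppoli, use the monotonicity of $\tilde\phi$, and choose $\beta$ (there via the explicit formula (\ref{who}), here by balancing the two terms) subject to the threshold $\beta_0=C_1(1+\|b\|_{W^{1,\infty}}+\|q\|^{1/2}_{W^{1,\infty}})$, which is precisely where the exponential prefactor comes from. You also correctly identify the only delicate point, namely absorbing the polynomial and $(\log r)^{-1}$ weight factors into that prefactor, a bookkeeping step the paper itself glosses over in the proof of Proposition \ref{pro2}.
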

In the following, we want to derive estimates that controls $L^2$
norm of $\hat{u}$ in $\tilde{\Omega}$ in terms of $L^2$ norm of
$\hat{u}$ in a small ball centered at $\partial\Omega$. In the
presentation, sometimes we drop the center of $\mathbb B_r$ if there
is no misunderstanding. By the lemma 3.7 in \cite{BL}, for any
$0<r<\delta$, there exists a point $x_\star \in\partial\Omega$ such
that
\begin{equation} \int_\Omega u^2\leq C r^{-2n+1}\int_{\mathbb B_r(
x_\star)\cap\Omega} u^2 \label{foll}
\end{equation}
for some $C$ depending only on $\Omega$. Recall that
$\hat{u}=e^{\lambda d(x)} u$. Obviously
\begin{equation}
\int_{\mathbb B_r( x_\star)\cap\Omega} u^2\leq \int_{\mathbb B_r(
x_\star)\cap\tilde{\Omega}} \hat{u}^2. \label{foln}
\end{equation}
Thus,
\begin{equation}
\int_\Omega u^2\leq C r^{-2n+1}\int_{\mathbb B_r(
x_\star)\cap\tilde{\Omega}} \hat{u}^2. \label{sss}
\end{equation}
 We can also check that
\begin{equation}
\int_{\Omega_\delta}
\hat{u}^2(x)=\int_{\Omega'_\delta}\hat{u}^2(x')|\nabla \Psi(x')|,
\end{equation}
where $|\nabla \Psi(x')|>C$ in $\Omega'_\delta$ with $C$ depending
only on $\Omega$, that is,
\begin{equation}
\int_{\Omega_\delta} \hat{u}^2(x)\geq
C\int_{\Omega'_\delta}\hat{u}^2(x'). \label{folm}
\end{equation}
Since
\begin{equation}
\int_\Omega u^2\geq
e^{-2\lambda\delta}\int_{\Omega_\delta}\hat{u}^2,
\end{equation}
it follows from (\ref{sss}) and (\ref{folm}) that
\begin{equation}
\int_{\tilde{\Omega}} \hat{u}^2\leq C e^{2\lambda\delta}
r^{-2n+1}\int_{\mathbb B_r( x_\star)\cap\tilde{\Omega}}\hat{u}^2.
\label{imp}
\end{equation}

\begin{lemma}
For any $0<r<r_0<\delta$, there exists $C_r$ such that for any
$x_0\in\partial\Omega$, the following holds
\begin{equation}
\int_{\mathbb B_r(x_0)}\hat{u}^2\geq
e^{-C_r\lambda}\int_{\tilde{\Omega}}\hat{u}^2.
\end{equation}
\label{lem2}
\end{lemma}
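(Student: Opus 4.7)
The plan is to use a standard ``propagation of smallness'' argument: we chain the three-ball theorem of Lemma \ref{three} along a sequence of overlapping balls centered on $\partial\Omega$, connecting an arbitrary point $x_0\in\partial\Omega$ to the distinguished point $x_\star$ furnished by (\ref{imp}). Since $\|b\|_{W^{1,\infty}}+\|q\|^{1/2}_{W^{1,\infty}}\leq C\lambda$ by (\ref{core}), every application of Lemma \ref{three} will contribute a factor $e^{C\lambda}$, and the total exponential weight after $N$ iterations will still be proportional to $\lambda$ provided $N$ depends only on $r$ and $\Omega$.

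First I would fix $0<r<r_0<\delta$ and select an auxiliary radius $R$ with $R/2\leq r$, $R<r_0$, and $2R<\delta$, so that $\mathbb B_{2R}(y)\subset\tilde\Omega$ for every $y\in\partial\Omega$ and Lemma \ref{three} is applicable at every boundary point. Since $\partial\Omega$ is smooth, compact and connected, I can then choose a chain $x_0=y_0,y_1,\ldots,y_N=x_\star$ on $\partial\Omega$ with $|y_{i+1}-y_i|<R/2$ and with $N\leq N(r,\Omega)$. The key geometric property is that $\mathbb B_{R/2}(y_{i+1})\subset\mathbb B_R(y_i)$, so the ``small ball'' at the next step is contained in the ``middle ball'' at the current step.

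Write $a_i=\int_{\mathbb B_{R/2}(y_i)}\hat u^2$ and $T=\int_{\tilde\Omega}\hat u^2$. Applying Lemma \ref{three} at $y_i$ with radius $R$ and using the inclusion above (together with $\mathbb B_{2R}(y_i)\subset\tilde\Omega$) gives
\begin{equation*}
a_{i+1}\leq\int_{\mathbb B_R(y_i)}\hat u^2\leq e^{C\lambda}\,T^{1-\gamma}\,a_i^{\gamma}.
\end{equation*}
Iterating this recursion $N$ times produces the log-convexity estimate
\begin{equation*}
a_N\leq e^{C\lambda/(1-\gamma)}\,T^{1-\gamma^N}\,a_0^{\gamma^N}.
\end{equation*}
On the other hand, (\ref{imp}) applied at $x_\star$ with radius $R/2$ yields $a_N\geq C^{-1}e^{-2\lambda\delta}(R/2)^{2n-1}T$. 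Eliminating $a_N$ between these two inequalities, raising to the power $\gamma^{-N}$, and absorbing $R$-dependent constants into $C_r$, we arrive at
\begin{equation*}
T\leq e^{C_r\lambda}\,a_0\leq e^{C_r\lambda}\int_{\mathbb B_r(x_0)}\hat u^2,
\end{equation*}
which is the claim.

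The main point requiring care is bookkeeping of constants: the exponent $\gamma^{-N}$ in the final step multiplies the $\lambda$-linear terms coming from Lemma \ref{three} and from (\ref{imp}), so the result only has the claimed form $e^{C_r\lambda}$ once we have verified that $N$ depends on $r$ and $\Omega$ but \emph{not} on $\lambda$. This is guaranteed by the fact that the chain length is determined purely by the geometry of $\partial\Omega$ and the choice $R\sim r$. Aside from this, the argument is a routine adaptation of the propagation of smallness scheme used, e.g., in \cite{BC}, and presents no serious obstacle.
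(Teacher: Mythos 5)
Your proposal is correct and follows essentially the same route as the paper: a chain of overlapping boundary balls along which the three-ball inequality of Lemma \ref{three} is iterated, anchored at the distinguished point supplied by (\ref{imp}), with the chain length $N$ depending only on $r$ and $\Omega$ so that the accumulated factor remains $e^{C_r\lambda}$. The only cosmetic difference is that the paper first normalizes $\int_{\tilde\Omega}\hat u^2=1$ (which removes the $T^{1-\gamma}$ factors from the recursion) and chains to the maximizing point $\bar x$ rather than directly to $x_\star$, but the argument is the same.
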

\begin{proof}
Without loss of generality, we may denote
$\int_{\tilde{\Omega}}\hat{u}^2=1$. Let $\bar x\in\partial\Omega$ be
the point where $\|\hat{u}\|_{\mathbb B_r(\bar
x)}=\sup_{x\in\partial\Omega}\|\hat{u}\|_{\mathbb B_r(x)}.$ From
(\ref{imp}),
\begin{equation}
\|\hat{u}\|_{\mathbb B_r(\bar x)}\geq e^{-2\lambda\delta} C_{r}
\label{impp}
\end{equation}
with $C_r$ depending on $\Omega$ and $r$. Applying the three-ball
theorem (i.e. Lemma \ref{three}) at any $x\in\partial\Omega$, it
follows that
\begin{eqnarray}
\|\hat{u}\|_{\mathbb B_{r/2}(x)}&\geq& e^{-C(1+\|b\|_{W^{1,
\infty}}+{\|q\|}^{1/2}_{W^{1,
\infty}})}\|\hat{u}\|^{\frac{1}{\gamma}}_{\mathbb B_r(x)} \nonumber
\\ &\geq & e^{-C\lambda}\|\hat{u}\|^{\frac{1}{\gamma}}_{\mathbb
B_r(x)}, \label{kkk}
\end{eqnarray}
where we have the assumptions for $b$ and $q$ in (\ref{core}).

 Define
a sequence of point $x_0$, $x_1$, $\cdots$, $x_m=\bar x$ on
$\partial\Omega$ such that
$$\mathbb B_{r/2}(x_{i+1})\subset  \mathbb B_{r}(x_{i}) \quad
\mbox{for} \ i=0, \cdots, m-1.$$ We can see that the number $m$
depends on $\Omega$ and $r$. From (\ref{kkk}), we have
$$\|\hat{u}\|_{\mathbb B_{r/2}(x_i)}\geq e^{-C\lambda}\|\hat{u}\|^{\frac{1}{\gamma}}_{\mathbb
B_{r/2}(x_{i+1})}$$ for $i=0, 1, \cdots, m-1.$ By iteration and
(\ref{impp}), it follows that
\begin{equation}
\int_{\mathbb B_r(x_0)}\hat{u}^2\geq
e^{-C_r\lambda}\int_{\tilde{\Omega}}\hat{u}^2. \label{kao}
\end{equation}
We are done with Lemma \ref{lem2}.
\end{proof}

Let
$$ A_{r, \, 2r}=\{x\in\tilde{\Omega}| r\leq |x-x_0|\leq 2r, \
\mbox{with} \ x_0\in\partial\Omega\}. $$ We can find that there
exits some $x_i\in A_{r, \, 2r}\cap\partial\Omega$ such that
$\mathbb B_{{r}/{3}}(x_i)\subset A_{r,\, 2r}$. It yields that
\begin{equation}
\int_{A_{r, 2r}}\hat{u}^2\geq
e^{-C_r\lambda}\int_{\tilde{\Omega}}\hat{u}^2. \label{rkao}
\end{equation}

We are ready to derive the doubling estimates for $\hat{u}$ on solid
balls on $\tilde{\Omega}$.
\begin{proposition}
There exist positive constants $r_0$, $C$ depending only on $\Omega$
such that for any solution $\hat{u}$ in (\ref{starg}),
\begin{equation}
\int_{\mathbb B_{2r}(x_0)} |\hat{u}|^2\leq e^{C\lambda}
\int_{\mathbb B_{r}(x_0)} |\hat{u}|^2
\end{equation}
for any $r<r_0$ and $x_0\in\partial\Omega$. \label{pro2}
\end{proposition}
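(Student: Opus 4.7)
My plan is to derive Proposition 2 by combining the three-ball inequality of Lemma 1 with the global lower bound of Lemma 2. Writing $m(R) := \|\hat u\|_{L^2(B_R(x_0))}$, Lemma 1 in logarithmic form reads
$$\log m(R) \leq \gamma \log m(R/2) + (1-\gamma) \log m(2R) + C\lambda,$$
which I view as a quantitative log-convexity defect of $R \mapsto \log m(R)$ in $\log R$ with additive loss $C\lambda$ (the factor $\|b\|_{W^{1,\infty}}+\|q\|^{1/2}_{W^{1,\infty}}$ in Lemma 1 is $O(\lambda)$ by \eqref{core}). Introducing the forward doubling slope $s(R) := \log(m(2R)/m(R))$ and rearranging gives the key recursion
$$s(R/2) \leq \alpha\, s(R) + C\lambda, \qquad \alpha := \frac{1-\gamma}{\gamma}.$$

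Assuming $\alpha < 1$, I would iterate this recursion backwards from a fixed reference scale $R_* \in (0, r_0)$ down to an arbitrary smaller scale $r$, which yields
$$s(r) \leq \alpha^{k} s(R_*) + \frac{C\lambda}{1-\alpha}, \qquad k \asymp \log_2(R_*/r),$$
whose right-hand side is uniformly bounded in $r$ once $s(R_*)$ is controlled. I would bound $s(R_*)$ at the fixed scale using Lemma 2: since $m(2R_*) \leq \|\hat u\|_{L^2(\tilde\Omega)} \leq e^{C\lambda} m(R_*)$, one obtains $s(R_*) \leq C\lambda$ with $C$ depending only on $\Omega$ (as $R_*$ is now fixed). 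Combining the two gives $s(r) \leq C\lambda$ uniformly in $r \in (0, r_0)$, which is exactly the doubling $m(2r) \leq e^{C\lambda}m(r)$ of Proposition 2.

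The principal technical obstacle is ensuring $\gamma > 1/2$ (equivalently $\alpha < 1$) in Lemma 1; without this the backward iteration in the recursion diverges. If the exponent $\gamma$ produced by the Carleman-based proof of Lemma 1 does not a priori exceed $1/2$, I would boost it by composing three-ball inequalities over adjacent geometric scales: substituting $\log m(2R) \leq \gamma \log m(R) + (1-\gamma)\log m(4R) + C\lambda$ into the original inequality produces an effective three-ball on $(R/2, R, 4R)$ with a strictly larger exponent $\gamma_1 = \gamma/(1-\gamma(1-\gamma)) > \gamma$ on the innermost ball, and finitely many such compositions push the exponent arbitrarily close to $1$ at the cost of a larger (but still $e^{C\lambda}$-type) multiplicative constant. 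Once a uniform $\alpha < 1$ is secured, the backward iteration closes and the proposition follows.
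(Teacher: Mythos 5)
Your route is genuinely different from the paper's: you try to deduce Proposition~\ref{pro2} by iterating the three-ball inequality of Lemma~\ref{three} down through dyadic scales, seeded by Lemma~\ref{lem2} at a fixed scale, whereas the paper never iterates Lemma~\ref{three} at all. The reduction to the doubling slope $s(R)$ and the use of Lemma~\ref{lem2} to bound $s(R_*)\le C\lambda$ at a fixed scale are fine, and you have correctly located the crux: the backward iteration closes only if the exponent $\gamma$ in Lemma~\ref{three} strictly exceeds the neutral log-convexity value $\tfrac12$ for the triple $(R/2,R,2R)$. But Lemma~\ref{three} asserts only $\gamma\in(0,1)$, and your proposed repair cannot supply the missing strict convexity. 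Composing three-ball inequalities maps the neutral exponent of the small triple to the neutral exponent of the enlarged triple: with $\gamma=\tfrac12$ on $(R/2,R,2R)$ your formula gives $\gamma_1=\tfrac{1/2}{1-1/4}=\tfrac23$ on $(R/2,R,4R)$, which is exactly the neutral value $\log 4/\log 8$ for that triple; more generally, log-linear profiles $\log M(\rho)=N\log\rho$ saturate every neutral three-ball inequality with zero error and hence saturate every composition of them, so the sign of the excess over neutrality is invariant under your boosting (indeed $\gamma_1>\tfrac23$ iff $\gamma>\tfrac12$). This matters because once the outer ball is $2^{k+1}R$ the recursion is no longer $s(R/2)\le\alpha\,s(R)+C\lambda$ but the multi-term $s(R/2)\le\frac{1-\gamma_k}{\gamma_k}\sum_{j=0}^{k}s(2^jR)+C\lambda$, whose convergence threshold is $\gamma_k>\frac{k+1}{k+2}$ (the neutral value), not $\gamma_k>\tfrac12$. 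With a neutral or sub-neutral exponent the iteration accumulates $C\lambda$ per dyadic step and yields only $s(r)\le C\lambda\log(R_*/r)$, i.e.\ a doubling constant $r^{-C\lambda}$ rather than $e^{C\lambda}$, which is not Proposition~\ref{pro2}.

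The paper sidesteps this entirely: it applies the Carleman estimate of Corollary~\ref{cor2} to $\psi\hat u$ with $\psi$ a cutoff supported in the annulus $\{\rho<r(x)<R\}$, absorbs the outer boundary contribution by choosing the Carleman parameter $\beta$ as an explicit, solution-dependent quantity in (\ref{who}), and then invokes the fixed-scale lower bound (\ref{rkao}) (a consequence of Lemma~\ref{lem2}) to guarantee $\beta\le C\lambda$ and to bound the resulting ratio of norms by $e^{C\lambda}$. The strict convexity your argument needs is built into the weight $\tilde\phi=\log|x|+\log((\log|x|)^2)$ via the $\log\log$ correction and shows up in the Carleman inequality itself (the $(\log r)^{-1}$ factors, i.e.\ the $\beta^3\|\sqrt{|\phi''|}v\|^2$ term); it is lost when the estimate is recorded only as a three-ball inequality with an unspecified $\gamma\in(0,1)$. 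To rescue your route you would have to prove Lemma~\ref{three} with an exponent strictly above neutral and then run the corrected multi-term recursion; otherwise you should follow the paper and use the Carleman inequality directly.
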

\begin{proof}
 Let
$R=\frac{r_0}{6}$, where $r_0$ is the one in the three-ball theorem.
Let $ 0<4\rho<\frac{r_0}{18}$. Select a small cut-off function with
$0\leq \psi\leq 1$ satisfying the following properties
\begin{equation}
\left \{ \begin{array}{lll} \psi(x)=0 \quad \mbox{if} \quad
r(x)<\rho \ \mbox{and} \
r(x)>R, \nonumber \medskip \\
\psi(x)=1 \quad \mbox{if} \quad \frac{3\rho}{2}<r(x)<\frac{R}{3},
\nonumber \medskip\\
|\nabla \psi(x)|\leq \frac{C}{\rho}, \ \mbox{and} \  |\nabla^2
\psi(x)|\leq \frac{C}{\rho^2} \ \mbox{if} \
\rho<r(x)<\frac{3\rho}{2},
\nonumber \medskip\\
|\nabla \psi(x)|\leq \frac{C}{R}, \ \mbox{and} \  |\nabla^2
\psi(x)|\leq \frac{C}{R^2} \ \mbox{if} \ \frac{R}{3}<r(x)<R.
\end{array}
\right.
\end{equation}
Now we apply the Carleman estimates in (\ref{usf}) with $\psi
\hat{u}$. Dropping the last term in the right hand side of
(\ref{usf}) yields
\begin{eqnarray}
\beta\rho\int e^{-2\beta\tilde{\phi}}|\psi \hat{u}|^2r^{-n-1} &+&
\beta^3\int (\log  r)^{-2}e^{-2\beta\tilde{\phi}}|\psi
\hat{u}|^2r^{-n}  \nonumber \\ &\leq C& \int
e^{-2\beta\tilde{\phi}}r^{4-n}\big(a_{ij}D_{ij}(\psi
\hat{u})+b_iD_i(\psi
\hat{u})+q\hat{u}\big)^2 \nonumber \\
 &\leq C& \int e^{-2\beta\tilde{\phi}}r^{4-n}\big(a_{ij}D_{ij}\psi \hat{u}+2a_{ij}D_i\hat{u}D_j\psi
 +b_iD_i\psi \hat{u}
 \big)^2.
\end{eqnarray}
By the properties of $\psi$ and the fact that $R$ depending only on
$\Omega$ and $\beta>1$, we have
\begin{eqnarray}
\int_{A_{\frac{3\rho}{2}, 4\rho}\cup A_{\frac{R}{12}, \frac{R}{6}}}
e^{-2\beta\tilde{\phi}}\hat{u}^2r^{-n} &\leq&
C(1+\lambda)^2\int_{A_{\rho, \frac{3\rho}{2}}\cup A_{\frac{R}{3},
R}}
e^{-2\beta\tilde{\phi}}\hat{u}^2r^{-n} \nonumber \\
&+ & C(\rho^2\int_{A_{\rho}, {\frac{3\rho}{2}}}
e^{-2\beta\tilde{\phi}}|\nabla
\hat{u}|^2r^{-n}+R^2\int_{A_{\frac{R}{3}, R}}
e^{-2\beta\tilde{\phi}}|\nabla \hat{u}|^2r^{-n}), \label{boo}
\end{eqnarray}
where the assumption in (\ref{core}) has been used. Recall the
following elliptic estimates for $\hat{u}$ in (\ref{starg}),
\begin{equation}
\|\nabla u\|_{\mathbb B_{ar}}\leq
C(\frac{1}{(1-a)r}+\|b\|_{L^\infty}+\|q\|_{L^\infty}^{1/2})\|
u\|_{\mathbb B_{r}}, \quad \mbox{for} \ 0<a<1.
\end{equation}
Applying elliptic estimates to the last two terms in the right hand
side of (\ref{boo}) and the decreasing property of $-\tilde{\phi}$,
we obtain
\begin{eqnarray}
e^{-2\beta\tilde{\phi}(4\rho)}\rho^{-n} \int_{A_{\frac{3\rho}{2},
4\rho}}
|\hat{u}|^2+e^{-2\beta\tilde{\phi}(\frac{R}{6})}R^{-n}\int_{A_{\frac{R}{12},
\frac{R}{6}}} |\hat{u}|^2 &\leq & C(1+\lambda)^4\big(
e^{-2\beta\tilde{\phi}(\rho)}\rho^{-n}\int_{\mathbb
B_{2\rho}}|\hat{u}|^2
\nonumber \\
&+&e^{-2\beta\tilde{\phi}(\frac{R}{3})}R^{-n}\int_{\mathbb
B_{\frac{3R}{2}}}|\hat{u}|^2 \big). \label{abo}
\end{eqnarray}
Adding $e^{-2\beta\tilde{\phi}(4\rho)}\rho^{-n} \int_{\mathbb
B_{\frac{3\rho}{2}}} |\hat{u}|^2 $ to both sides of (\ref{abo}), we
have
\begin{eqnarray}
e^{-2\beta\tilde{\phi}(4\rho)}\rho^{-n}\int_{\mathbb B_{4\rho}}
|\hat{u}|^2 +
e^{-2\beta\tilde{\phi}(\frac{R}{6})}R^{-n}\int_{A_{\frac{R}{12},
\frac{R}{6}}} |\hat{u}|^2 &\leq & C(1+\lambda)^4\big(
e^{-2\beta\tilde{\phi}(\rho)}\rho^{-n}\int_{\mathbb
B_{2\rho}}|\hat{u}|^2
\nonumber \\
&+&e^{-2\beta\tilde{\phi}(\frac{R}{3})}R^{-n}\int_{\mathbb
B_{\frac{3R}{2}}}|\hat{u}|^2 \big). \label{abso}
\end{eqnarray}
We want to absorb the last term in the right hand side of
(\ref{abso}) into the left hand side. Let $\beta$ satisfy that
\begin{equation}
C(1+\lambda)^4e^{-2\beta\tilde{\phi}(\frac{R}{3})}\int_{\mathbb
B_{\frac{3R}{2}}}|\hat{u}|^2\leq
\frac{1}{2}e^{-2\beta\tilde{\phi}(\frac{R}{6})}\int_{A_{\frac{R}{12},
\frac{R}{6}}} |\hat{u}|^2. \nonumber
\end{equation}
In order to make it true, we only need to select $\beta$ to be
\begin{equation}
\beta=\frac{1}{2(\tilde{\phi}(\frac{R}{6})-\tilde{\phi}(\frac{R}{3}))}\ln
\frac{\int_{A_{\frac{R}{12}, \frac{R}{6}}} |\hat{u}|^2}{
2C(1+\lambda)^4\int_{\mathbb
B_{\frac{3R}{2}}}|\hat{u}|^2}+C_1(1+\|b\|_{W^{1,
\infty}}+{\|q\|}^{1/2}_{W^{1, \infty}}). \label{who}\end{equation}
Thus,
\begin{eqnarray}
e^{-2\beta\tilde{\phi}(4\rho)}\rho^{-n}\int_{\mathbb B_{4\rho}}
|\hat{u}|^2 &+&\frac{1}{2}
e^{-2\beta\tilde{\phi}(\frac{R}{6})}R^{-n}\int_{A_{\frac{R}{12},
\frac{R}{6}}} |\hat{u}|^2 \nonumber \\ &\leq & C(1+\lambda)^4
e^{-2\beta\tilde{\phi}(\rho)}\rho^{-n}\int_{\mathbb
B_{2\rho}}|\hat{u}|^2. \label{absoo}
\end{eqnarray}
Dropping the second term in the left hand side leads to
\begin{equation}
e^{-2\beta\tilde{\phi}(4\rho)}\int_{\mathbb B_{4\rho}}
|\hat{u}|^2\leq C(1+\lambda)^4
e^{-2\beta\tilde{\phi}(\rho)}\int_{\mathbb B_{2\rho}}
|\hat{u}|^2.\label{ddd}
\end{equation}
Let
$D_R=(\tilde{\phi}(\frac{R}{6})-\tilde{\phi}(\frac{R}{3}))^{-1}$. We
know that $0<A^{-1}<-D_R<A$ where $A$ does not depend on $R$. Note
that $0<\tilde{\phi}(4\rho)-\tilde{\phi}(\rho)<C$. It follows from
(\ref{who}) and (\ref{ddd}) that
\begin{equation}
\int_{\mathbb B_{4\rho}} |\hat{u}|^2\leq e^{C\lambda} \big(
\frac{\int_{\mathbb
B_{\frac{3R}{2}}}|\hat{u}|^2}{\int_{A_{\frac{R}{12}, \frac{R}{6}}}
|\hat{u}|^2} \big)^{A^{-1}} \int_{\mathbb B_{2\rho}} |\hat{u}|^2.
\end{equation}
From (\ref{rkao}), we arrive at
\begin{equation}
\int_{\mathbb B_{4\rho}} |\hat{u}|^2\leq e^{C\lambda} \int_{\mathbb
B_{2\rho}} |\hat{u}|^2.
\end{equation}
This completes the proof of proposition \ref{pro2}.
\end{proof}

\section{Doubling estimates and vanishing order on the boundary}
In the above section, the doubling property for $\hat{u}$ on solid
balls is given. Next we obtain a doubling inequality on the boundary
of $\Omega$. We do a scaling for $\hat{u}$ in $\mathbb
B_{\frac{r_0}{\lambda}}(x_0)$. Let
$$\bar u(x)= \hat{u}(x_0+\frac{x}{\lambda})$$ for $x\in \mathbb
B_{r_0}$. From (\ref{starg}), $\bar u$ satisfies
\begin{equation}
div (\bar A\nabla \bar u)+\bar b\cdot \nabla \bar u+\bar q \bar u=0
\quad a.e. \ \ \mbox{in} \ \mathbb B_{r_0} \label{resc}
\end{equation}
where
 \begin{equation}
\left \{ \begin{array}{lll} \bar A(x)=A(x_0+\frac{x}{\lambda}), \medskip\\
\bar b(x)=\lambda^{-1} b(x_0+\frac{x}{\lambda}), \medskip\\
\bar q(x)=\lambda^{-2} q(x_0+\frac{x}{\lambda}).
\end{array}
\right.
\end{equation}
We can also check that \\
(i) there exists some positive constant $\varrho$ such that
$$ \varrho |\xi|^2\leq \sum {\bar a_{ij}(x)}\xi_i\xi_j $$
where $\bar A(x)=\big(\bar a_{ij}(x)\big)^n_{i,j=1}$. \\
(ii) There exists constant $K_1$ such that
$$ |\bar a_{ij}(x)-\bar a_{ij}(y)|\leq K_1|x-y|. $$\\
(iii) There exists constant $K_2$ such that
$$ \sum_{i,j}\|\bar a_{i,j}\|_{L^\infty(\mathbb
B_1)}+\sum_{i}\|\bar b_i\|_{L^\infty(\mathbb
B_1)}+\|q\|_{L^\infty(\mathbb B_1)}\leq K_2.$$

Based on the doubling estimates on $\tilde{\Omega}$, we can prove
the doubling estimates on $\partial\Omega$. We follow the idea in
\cite{BL}.  The following lemma obtained in \cite{Lin} is the
connection between the $L^2$ norm of solutions on the boundary and
solid balls. For completeness of the presentation, we state the
lemma.
\begin{lemma}
Assume that $u$ is the solution of (\ref{resc}) in $\mathbb B_1^+$
with coefficient satisfying (i)-(iii) and $ \| u\|_{L^2(\mathbb
B_1^+)}\leq 1$. Suppose that
\begin{equation}
\|u\|_{H^1(\Gamma)}+\|\frac{\partial u}{\partial
x_n}\|_{L^2(\Gamma)}\leq \epsilon<<1,
\end{equation}
where $\Gamma=\{(x', 0)\in\mathbb R^n, |x'|\leq {2}/{3}\}$, then
$$\|u\|_{L^2(\mathbb B^+_{{1}/{2}})}\leq C\epsilon^{\alpha}     $$
for the constants $C$, $\alpha$ depending on $\varrho$, $K_1$, $K_2$
and $n$. \label{lin}
\end{lemma}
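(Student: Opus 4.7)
The plan is to reduce this quantitative unique continuation from the boundary to an interior three-ball argument by extending $u$ by zero across the flat portion $\Gamma$. Define $\tilde u = u$ in $\mathbb B_1^+$ and $\tilde u \equiv 0$ in $\mathbb B_1^-$. Computing distributional derivatives across $\Gamma$ and invoking the jump relations, one finds
$$ \mathrm{div}(\bar A\nabla\tilde u)+\bar b\cdot\nabla\tilde u+\bar q\tilde u \;=\; F \quad\text{in }\mathcal{D}'(\mathbb B_1), $$
where $F$ is a distribution supported on $\Gamma$ of the schematic form $F = c_1(x')\,\delta_\Gamma + c_2(x')\,\partial_n\delta_\Gamma$, with $c_1$ a linear expression in $\partial_n u|_\Gamma$ and $\nabla' u|_\Gamma$, and $c_2$ proportional to $u|_\Gamma$. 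By the trace/duality pairing, the hypothesis $\|u\|_{H^1(\Gamma)}+\|\partial_n u\|_{L^2(\Gamma)}\le\epsilon$ translates into $\|F\|_{H^{-1}(\mathbb B_1)}\le C\epsilon$: the $H^1$-regularity of $u|_\Gamma$ is dual to the $H^{-1/2}$-regularity of $\partial_n\phi|_\Gamma$ needed to pair against $\partial_n\delta_\Gamma$, while the $L^2$-norm of $\partial_n u|_\Gamma$ handles the $\delta_\Gamma$ piece.

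Next I would solve the auxiliary Dirichlet problem $\mathrm{div}(\bar A\nabla w)+\bar b\cdot\nabla w+\bar q w = F$ with $w\in H^1_0(\mathbb B_1)$. Solvability follows from hypotheses (i)--(iii) by the Lax--Milgram/Fredholm framework (after, if needed, enlarging the zero-order term by a large constant and subtracting the correction to force coercivity), and standard elliptic estimates give $\|w\|_{H^1(\mathbb B_1)}\le C\|F\|_{H^{-1}}\le C\epsilon$. Setting $h := \tilde u - w$, the function $h$ is a genuine weak solution of the homogeneous equation on the whole ball $\mathbb B_1$, to which interior three-ball inequalities apply. Such inequalities hold under (i)--(iii) either by an interior version of the Carleman estimate proved in Proposition \ref{esti}, or by classical Garofalo--Lin-type arguments using the frequency function.

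Finally I propagate smallness. Because $\tilde u \equiv 0$ on $\mathbb B_1^-$, we have $h = -w$ there, hence $\|h\|_{L^2(\mathbb B_{1/8}(x_\ast))}\le C\epsilon$ for a fixed point $x_\ast$ well inside $\mathbb B_1^-$. Meanwhile $\|h\|_{L^2(\mathbb B_1)}\le 1+C\epsilon$. Chaining the three-ball inequality along a fixed Harnack-type sequence of overlapping balls connecting $x_\ast$ to any point of $\overline{\mathbb B^+_{1/2}}$ --- with the number of steps depending only on $\varrho, K_1, K_2, n$ --- each step raises the small factor to a power $\gamma\in(0,1)$, yielding $\|h\|_{L^2(\mathbb B^+_{1/2})}\le C\epsilon^\alpha$ for some $\alpha\in(0,1)$. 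Then $\|u\|_{L^2(\mathbb B^+_{1/2})}\le \|h\|_{L^2(\mathbb B^+_{1/2})}+\|w\|_{L^2(\mathbb B_1)}\le C\epsilon^\alpha$, as required. The main obstacle is the distributional bookkeeping in the first step: one must precisely match the Sobolev orders of the two pieces of Cauchy data on $\Gamma$ with the two distributional orders ($\delta_\Gamma$ and $\partial_n\delta_\Gamma$) appearing in $F$, and this balance is exactly what dictates the combined $H^1(\Gamma) + L^2(\Gamma)$ hypothesis in the statement.
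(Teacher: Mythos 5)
First, a point of reference: the paper does not prove Lemma \ref{lin}; it is quoted from \cite{Lin}, so there is no in-paper argument to compare yours against. Your overall strategy --- extend across $\Gamma$, correct by an auxiliary solution so as to obtain a homogeneous solution on a full ball, then propagate smallness from the lower half-ball by chaining three-ball inequalities --- is the standard route to such propagation-of-smallness statements. But the pivotal first step fails as written. The distribution $F$ produced by the zero extension contains the double-layer term $a_{nn}\,u|_\Gamma\,\partial_n\delta_\Gamma$ (coming from $\partial_n\tilde u=\chi_{\{x_n>0\}}\partial_n u+u|_\Gamma\,\delta_{\{x_n=0\}}$, followed by the divergence), and this term does \emph{not} belong to $H^{-1}(\mathbb B_1)$: pairing it with $\phi\in H^1_0$ requires the trace of $\partial_n\phi$ on $\Gamma$, which does not exist for $H^1$ functions, and no amount of tangential regularity of $u|_\Gamma$ lowers the normal distributional order (even $\partial_n\delta_\Gamma$ with density $1$ lies only in $H^{-3/2-\varepsilon}$). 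Since $a_{nn}\ge\varrho>0$, this term is always present. Equivalently, $\tilde u\notin H^1$ across $\Gamma$ unless $u|_\Gamma=0$, so after subtracting $w\in H^1_0$ the function $h=\tilde u-w$ is not an $H^1$ weak solution of the homogeneous equation, and the three-ball inequality has nothing to act on. The claimed bound $\|F\|_{H^{-1}(\mathbb B_1)}\le C\epsilon$ is therefore false, and the Lax--Milgram correction does not close the argument.

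The repair requires an extra idea, not just bookkeeping: before extending, subtract from $u$ a function $g$ reproducing the Cauchy data on $\Gamma$ (schematically $g=\eta(x)\left(u_0(x')+x_n u_1(x')\right)$ with $u_0=u|_\Gamma$, $u_1=\partial_n u|_\Gamma$), so that $u-g$ has vanishing Cauchy data and its zero extension is an honest $H^1$ function satisfying an equation with an $H^{-1}$ right-hand side of size $O(\epsilon)$. Because $u_1$ is only in $L^2(\Gamma)$, $g$ itself is not in $H^1$, so one must mollify $u_0,u_1$ at a scale tied to $\epsilon$ and balance the two resulting errors; this is precisely where the mismatch of Sobolev orders you flag at the end gets resolved, and it contributes (together with the three-ball chaining) to the H\"older exponent $\alpha$. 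A secondary but real point: the zero extension also creates uncontrolled layers on $\left(\{x_n=0\}\cap\mathbb B_1\right)\setminus\Gamma$, where you have no smallness, so the construction and the chain of balls must be localized to, say, $\mathbb B_{2/3}$; the geometry still works for reaching $\mathbb B^+_{1/2}$, but it should be stated.
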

The following J.L. Lions-type lemma is shown in \cite{BL}.
\begin{lemma}
Let $u\in H^2(\mathbb R^n)$ and the trace of $u$ on $\{x\in \mathbb
R^n|x_n=0\}=\mathbb R^{n-1}$ be denoted by $u$. Then there exist a
constant $C$ depending on $n$ such that for any $\epsilon>0$,
$$\|\nabla u\|_{L^2(\mathbb R^{n-1})}\leq \epsilon \|u\|_{H^2(\mathbb R^n)}+\frac{C}{\epsilon^2}\|u\|_{L^2(\mathbb R^{
n-1})}.$$ \label{lions}
\end{lemma}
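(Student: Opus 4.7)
\smallskip

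The plan is to prove this by passing to the partial Fourier transform in the tangential variables $x'=(x_1,\ldots,x_{n-1})$, splitting the frequency space into low and high modes, and then optimizing the cutoff. Throughout, write $\tilde u(\xi',\xi_n)$ for the full Fourier transform and $\hat u(\xi',x_n)$ for the partial Fourier transform in $x'$. By Plancherel applied on $\mathbb R^{n-1}$,
\begin{equation*}
\|u\|_{L^2(\mathbb R^{n-1})}^{2}=\int_{\mathbb R^{n-1}}|\hat u(\xi',0)|^{2}\,d\xi',\qquad \|\nabla u\|_{L^2(\mathbb R^{n-1})}^{2}=\int_{\mathbb R^{n-1}}|\xi'|^{2}|\hat u(\xi',0)|^{2}\,d\xi',
\end{equation*}
so the whole question reduces to controlling the measure $|\xi'|^{2}|\hat u(\xi',0)|^{2}\,d\xi'$.

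\smallskip

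Next I would fix a parameter $N>0$ and write the $\xi'$-integral as the sum of its low-frequency part $\{|\xi'|\leq N\}$ and its high-frequency part $\{|\xi'|>N\}$. The low-frequency contribution is trivial: $|\xi'|^{2}\leq N^{2}$ gives immediately
\begin{equation*}
\int_{|\xi'|\leq N}|\xi'|^{2}|\hat u(\xi',0)|^{2}\,d\xi'\leq N^{2}\|u\|_{L^2(\mathbb R^{n-1})}^{2}.
\end{equation*}
For the high-frequency piece I would invert in $\xi_{n}$ by writing $\hat u(\xi',0)=(2\pi)^{-1/2}\int\tilde u(\xi',\xi_n)\,d\xi_n$, insert a weight, and apply Cauchy--Schwarz to get
\begin{equation*}
|\hat u(\xi',0)|^{2}\leq C\Big(\int(1+|\xi|^{2})^{-2}\,d\xi_n\Big)\int(1+|\xi|^{2})^{2}|\tilde u(\xi',\xi_n)|^{2}\,d\xi_n.
\end{equation*}
The first factor evaluates by a one-variable residue (or a trigonometric substitution) to $c\,(1+|\xi'|^{2})^{-3/2}$, so
\begin{equation*}
\int_{|\xi'|>N}|\xi'|^{2}|\hat u(\xi',0)|^{2}\,d\xi'\leq C\sup_{|\xi'|>N}\frac{|\xi'|^{2}}{(1+|\xi'|^{2})^{3/2}}\cdot\|u\|_{H^{2}(\mathbb R^{n})}^{2}\leq \frac{C}{N}\,\|u\|_{H^{2}(\mathbb R^{n})}^{2}.
\end{equation*}

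\smallskip

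Combining the two estimates I obtain $\|\nabla u\|_{L^2(\mathbb R^{n-1})}^{2}\leq N^{2}\|u\|_{L^2(\mathbb R^{n-1})}^{2}+CN^{-1}\|u\|_{H^{2}(\mathbb R^{n})}^{2}$. Taking square roots and balancing by the choice $N=C'/\epsilon^{2}$ yields exactly the desired inequality
\begin{equation*}
\|\nabla u\|_{L^2(\mathbb R^{n-1})}\leq \epsilon\|u\|_{H^{2}(\mathbb R^{n})}+\frac{C}{\epsilon^{2}}\|u\|_{L^2(\mathbb R^{n-1})},
\end{equation*}
equivalently the interpolation bound $\|\nabla u\|_{L^2(\mathbb R^{n-1})}\leq C\|u\|_{L^2(\mathbb R^{n-1})}^{1/3}\|u\|_{H^{2}(\mathbb R^{n})}^{2/3}$ followed by Young with exponents $(3,3/2)$. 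The main thing to get right is the Cauchy--Schwarz step: the precise decay $(1+|\xi'|^{2})^{-3/2}$ of the weight integral is what forces the exponents $1/3$ and $2/3$ in the interpolation, and in turn the $\epsilon$ versus $\epsilon^{-2}$ pairing; a cruder weight would produce the wrong balance and miss the $\epsilon^{-2}$ exponent stated in the lemma.
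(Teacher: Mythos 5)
Your proof is correct. The paper itself does not prove this lemma---it only cites \cite{BL}---but your argument (partial Fourier transform in $x'$, Cauchy--Schwarz in $\xi_n$ producing the weight $c(1+|\xi'|^2)^{-3/2}$, frequency splitting at $N\sim\epsilon^{-2}$) is the standard proof, and the exponent bookkeeping ($N^2$ versus $N^{-1}$, hence $\epsilon$ versus $\epsilon^{-2}$) is right; the only point worth making explicit is that $\nabla u$ on $\mathbb R^{n-1}$ must be read as the \emph{tangential} gradient of the trace, which is exactly what your Plancherel identity computes (the normal derivative of an $H^2$ function cannot be controlled by $\|u\|_{L^2(\mathbb R^{n-1})}$ at all, e.g.\ $u=x_n\phi$).
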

Now we are ready to show the proof of Theorem \ref{th1}.
\begin{proof}[Proof of Theorem \ref{th1}] Since $u=\hat{u}$ on $\partial\Omega$, in order to prove that
$$ \int_{\mathbb B_{2r}(x)\cap\partial\Omega}u^2\leq e^{C\lambda}\int_{\mathbb
B_{r}(x)\cap\partial\Omega}u^2 $$ for $r\leq \frac{r_0}{\lambda}$
and $x\in\partial\Omega$, by letting $x=x_0\in\partial\Omega$, it
suffices to prove \begin{equation}\int_{\mathbb
B_{2r}\cap\partial\hat{\Omega}}\bar u^2\leq e^{C\lambda}
\int_{\mathbb B_r\cap\partial\hat{\Omega}} \bar u^2 \label{goal}
\end{equation} for $r\leq r_0$ with $\hat{\Omega}=\{ x|
x_0+\frac{x}{\lambda}\in\Omega\}$ and $\mathbb B_{r}$ centered at
origin. From Proposition 2, there exist $r_0$, $C$ depending on
$\Omega$ and $n$ such that
$$\int_{\mathbb B_{2r}(x_0)} \hat{u}^2\leq e^{C\lambda}\int_{\mathbb B_{r}(x_0)} \hat{u}^2 $$
for $r\leq \frac{r_0}{2}$. It is true that
\begin{equation}\int_{\mathbb B_{2r}} \bar{u}^2\leq
e^{C\lambda}\int_{\mathbb B_{r}} \bar{u}^2. \label{soli}
\end{equation}

Next we want to flatten the hypersurface $\partial\Omega$. Since
$\partial\Omega$ is smooth, we can regard $\mathbb
B_{\frac{r_0}{\lambda}(x_0)}\cap\partial\Omega$ as a smooth function
with $C^2$ norm bounded by a constant $C_0$ independent of $x_0$ and
$\lambda$. After rescaling, we assume that
$$\mathbb B_{r_0}\cap \partial\Omega_{x_0, \lambda}=\{ (x',\, x_n)\in\mathbb B_{r_0}
| x'\in\mathbb B^{n-1}_{r_0}, \ x_n=\Phi(x')\}, $$ where $\Phi\in
C^2(\mathbb B^{n-1}_{r_0}),$ $\Phi(0)=0$, $\nabla\Phi(0)=0$ and
$\|\Phi\|_{C^2}\leq C_0$. Furthermore, $\|\Phi\|_{C^1(\mathbb
B^{n-1}_{r_0})}\leq \epsilon$ with $\epsilon$ small. Introducing the
injective map $F: \mathbb B^{n-1}_{r_0}\times \mathbb R\to \mathbb
R^n$ as
$$ F(x', \,x_n)=(x', \, x_n+\Phi(x')).$$
We can check that
$$\mathbb B_{\frac{r}{1+\epsilon}}\subset F(\mathbb B_r)\subset
\mathbb B_{(1+\epsilon)r},$$
$$F(\mathbb B^{n-1}_{\frac{r}{1+\epsilon}}\times \{0\})\subset \mathbb B_r\cap\partial\Omega_{x_0,\,\lambda}\subset
F(\mathbb B^{n-1}_{(1+\epsilon)r}\times \{0\}).$$ If we show that
\begin{equation}
\int_{F(\mathbb B^{n-1}_{2r}\times\{0\})} \bar u^2\leq e^{C\lambda}
\int_{F(\mathbb B^{n-1}_{r}\times\{0\})} \bar u^2, \label{purp}
\end{equation}
then it implies (\ref{goal}). We define
$$w(x)=\bar u(F(x)) \quad \mbox{for} \ x\in\mathbb B_{\frac{r_0}{1+\epsilon}}.      $$
To prove (\ref{purp}), by the area formula, it is enough to show
that
\begin{equation}
\int_{\mathbb B^{n-1}_{2r}\times \{0\}} w^2\leq e^{C\lambda}
\int_{\mathbb B^{n-1}_{r}\times \{0\}} w^2
\label{doi}\end{equation}for $0<r<\frac{r_0}{2(1+\epsilon)}$, where
$r_0$ depends only on $\Omega$. By the same argument, it follows
from (\ref{soli}) that
\begin{equation}
\int_{\mathbb B_{2r}} w^2\leq e^{C\lambda} \int_{\mathbb B_r} w^2
\label{soli2}
\end{equation}
for $0<r<\frac{r_0}{2(1+\epsilon)}$. We can check that $w$ solve
\begin{equation}
div(\tilde{A}\nabla w)+\tilde{b}\nabla w+\tilde{q} w=0 \label{eqn}
\end{equation} where $\tilde{A}$, $\tilde{b}$ and $\tilde{q}$
satisfy (i)-(iii) with similar bounds independent of $\lambda$. We
fixed $r<\frac{r_0}{2(1+\epsilon)}$ so that the condition
(\ref{soli2}) holds. We do a rescaling to consider fixed radius. Let
\begin{equation} \bar{w}(x)=\tilde{C}w(rx)
\label{back}
\end{equation}
where the constant $\tilde{C}$ is chosen so that
\begin{equation}\int_{\mathbb B_2}\bar{w}(x)=1.
\label{back1}\end{equation}
 By the doubling estimates
in (\ref{soli2}), it follows that
\begin{equation}
\int_{\mathbb B_{{1}/{2}}}\bar{w}\geq e^{-C\lambda}. \label{ston}
\end{equation}
We can see that $\bar{w}$ also satisfies (\ref{eqn}). If
$$\|\bar{w}\|_{H^1(\mathbb B^{n-1}_{2/3})}+\|\frac{\partial \bar{w}}{\partial
x_n}\|_{L^2(\mathbb B^{n-1}_{2/3})}=\tilde{\epsilon}<<1,
$$  by Lemma \ref{lin} and (\ref{back1}), we have
$$ \|\bar{w}\|_{L^2(\mathbb B_{{1}/{2}})}\leq C\tilde{\epsilon}^\alpha.         $$
Due to the lower bound (\ref{ston}), we obtain
$$\tilde{\epsilon}>e^{\frac{-C\lambda}{\alpha}}C^{\frac{-1}{\alpha}},       $$
that is,
$$\|\bar{w}\|_{H^1(\mathbb B^{n-1}_{2/3})}+\|\frac{\partial \bar{w}}{\partial
x_n}\|_{L^2(\mathbb B^{n-1}_{2/3})}>e^{-C\lambda}.$$ Since the
normal derivative of $\bar v$ vanishes on
$\partial\Omega_{x_0,\lambda}$, using area formula,
$$ \|\nabla \bar{w}\|_ {L^2(\mathbb B^{n-1}_{2/3})}\geq \frac{1}{C} \|\frac{\partial \bar{w}}{\partial
x_n}\|_{L^2(\mathbb B^{n-1}_{2/3})}.$$ Therefore,
\begin{equation}
\|\bar{w}\|_{H^1(\mathbb B^{n-1}_{2/3})}\geq e^{-C\lambda}.
\label{fain}
\end{equation}
We claim that the estimate $$\|\bar{w}\|_{H^1(\mathbb
B^{n-1}_{2/3})}\geq \bar\epsilon$$ implies that
$$\|\bar{w}\|_{L^2(\mathbb B^{n-1}_{5/6})}\geq
\bar \epsilon^3/C.$$ Introduce a cut-off function $\psi\in
C^{\infty}_0(\mathbb R^{n})$ such that $\psi=1$ on $\mathbb B_{2/3}$
and $\psi=0$ on $\mathbb R^n\backslash {\mathbb B_{5/6}}$. Consider
$\bar{w}'=\bar{w}\cdot\psi$, we obtain for $\eta>0$,
\begin{eqnarray}
\|\nabla \bar{w}\|_{L^2(\mathbb B^{n-1}_{2/3})}&\leq & \|\nabla
\bar{w}'\|_{L^2(\mathbb R^{n-1})} \nonumber  \\
&\leq &\eta \|\bar{w}'\|_{H^2(\mathbb
R^n)}+\frac{C}{\eta^2}\|\bar{w}'\|_{L^2(\mathbb R^{ n-1})}
\nonumber \\
& \leq &C \eta+ \frac{C}{\eta^2}\|\bar{w}\|_{L^2(\mathbb
B^{n-1}_{5/6})} \nonumber
\end{eqnarray}
where we have used the interior elliptic estimates and Lemma
\ref{lions}. Let $\eta=\frac{\bar\epsilon}{2C}$, we have
\begin{eqnarray}
\bar\epsilon-\|\bar{w}\|_{L^2(\mathbb B^{n-1}_{2/3})}& \leq &
\|\nabla \bar{w}\|_{L^2(\mathbb B^{n-1}_{2/3})} \nonumber \\
&\leq &\frac{\bar\epsilon}{2}+ \frac{C}{\bar\epsilon^2}
\|\bar{w}\|_{L^2(\mathbb B^{n-1}_{5/6})}. \nonumber \end{eqnarray}
Then the claim follows. So (\ref{fain}) implies that
\begin{equation}\|\bar{w}\|_{L^2(\mathbb B^{n-1}_{5/6})}\geq e^{-C\lambda}.
\label{ggg}
\end{equation} Thanks to the trace theorem and interior elliptic estimates,
\begin{eqnarray}\|\bar w\|_{L^2(\mathbb B^{n-1}_{5/3})}&\leq& C\|\bar
w\|_{H^1(\mathbb B_{5/3})} \nonumber \\
&\leq &C\|\bar w\|_{L^2(\mathbb B_{2})}\leq C. \nonumber
\end{eqnarray}
Then
$$\|\bar w\|_{L^2(\mathbb B^{n-1}_{5/6})}\geq e^{-C\lambda}\|\bar w\|_{L^2(\mathbb B^{n-1}_{5/3})},        $$
that is,
$$\| w\|_{L^2(\mathbb B^{n-1}_{5r/6})}\geq e^{-C\lambda}\|  w\|_{L^2(\mathbb B^{n-1}_{5r/3})}$$
for $r\leq \frac{r_0}{2(1+\epsilon)}$. Now we fix $\epsilon$. For
instance, let $0<\epsilon<\frac{1}{10}$. From (\ref{doi}), we arrive
at Theorem \ref{th1}.
\end{proof}

Thanks to the argument in proving Theorem 1, we present the proof of
corollary \ref{cor1}.
\begin{proof}[Proof of Corollary \ref{cor1}]
We consider any point $x_0\in \partial\Omega$. For convenience, we
may assume $x_0$ to be origin. On one hand, by (\ref{back}) and
(\ref{back1}),
\begin{eqnarray}
\frac{1}{\tilde{C}}=\int_{\mathbb B_2}
w^2(rx)&=&\frac{1}{r^n}\int_{\mathbb B_{2r}} \bar u(F(x)) \nonumber \\
&\geq &\frac{\lambda^n}{r^n}\int_{\mathbb
B_\frac{2r}{(1+\epsilon)\lambda}}\hat{u}^2.
\end{eqnarray}
On the other hand, it follows from (\ref{ggg}) that \begin{eqnarray}
\frac{e^{-C\lambda}}{\tilde{C}}\leq
{\frac{1}{\tilde{C}}}\int_{\mathbb B^{n-1}_{5/6}}\bar
w&=&\frac{1}{r^{n-1}}\int_{\mathbb B^{n-1}_{{5r}/6}}\bar u(F(x))
\nonumber \\
&\leq &\frac{\lambda^{n-1}}{r^{n-1}}\int_{\mathbb
B^{}_{{5(1+\epsilon)r}/6\lambda}\cap\partial\Omega}\hat{u}^2
\nonumber \\ &=& \frac{\lambda^{n-1}}{r^{n-1}}\int_{\mathbb
B^{}_{{5(1+\epsilon)r}/6\lambda}\cap\partial\Omega}{u}^2.
\end{eqnarray}
By taking $\epsilon$ appropriately small, we have
\begin{equation}
e^{C\lambda}\frac{r}{\lambda}\int_{\mathbb
B_\frac{r}{\lambda}\cap\partial\Omega} u^2\geq \int_{\mathbb
B_{\frac{5r}{3\lambda}}}\hat{u}^2. \label{kin}
\end{equation}
Applying the elliptic estimates \cite{GT} (Theorem 8.17) for
(\ref{starg}),
\begin{equation}
e^{C\sqrt{\lambda}}r^{\frac{-n}{2}}\|\hat{u}\|_{L^2(\mathbb
B_{\frac{5r}{3}})}\geq \|\hat{u}\|_{L^\infty(\mathbb
B_{\frac{4r}{3}})}. \label{eee}
\end{equation}
Since
\begin{equation}
\|\hat{u}\|_{L^\infty(\mathbb B_{\frac{4r}{3}})}\geq
\|u\|_{L^\infty(\mathbb B_{\frac{4r}{3}}\cap\partial\Omega)}.
\label{fff}
\end{equation}
We conclude from (\ref{eee}) and (\ref{fff}) that
\begin{equation}
e^{C\sqrt{\lambda}}(\frac{r}{\lambda})^{\frac{-n}{2}}\|\hat{u}\|_{L^2(\mathbb
B_{\frac{5r}{3\lambda}})}\geq \|u\|_{L^\infty(\mathbb
B_{\frac{4r}{3\lambda}}\cap\partial\Omega)}.
\end{equation}
Taking (\ref{kin}) into consideration yields that
\begin{equation}
e^{C{\lambda}}(\frac{r}{\lambda})^{\frac{-n+1}{2}}\|{u}\|_{L^2(\mathbb
B_{\frac{r}{\lambda}}\cap\partial\Omega)}\geq
\|u\|_{L^\infty(\mathbb B_{\frac{4r}{3\lambda}}\cap\partial\Omega)}.
\end{equation}
Using H$\ddot{o}$lder's inequality,
\begin{equation}
e^{C{\lambda}}\|u\|_{L^\infty(\mathbb
B_{\frac{r}{\lambda}}\cap\partial\Omega)}\geq
\|u\|_{L^\infty(\mathbb B_{\frac{4r}{3\lambda}}\cap\partial\Omega)}.
\end{equation}
By iteration, we arrive at
\begin{equation}e^{C{\lambda}}\|u\|_{L^\infty(\mathbb
B_{\frac{r}{\lambda}}\cap\partial\Omega)}\geq
\|u\|_{L^\infty(\mathbb B_{\frac{2r}{\lambda}}\cap\partial\Omega)}.
\end{equation}
\end{proof}

With the help of the Carleman estimates and the analysis above, we
show Theorem 2 as below.
\begin{proof}[Proof of Theorem \ref{th2}]
As before, we assume any point $x_0\in \partial\Omega$ as origin.
Dropping the first term in the left hand side of (\ref{absoo}) gives
that
\begin{eqnarray}
\frac{1}{2}
e^{-2\beta\tilde{\phi}(\frac{R}{6})}R^{-n}\int_{A_{\frac{R}{12},
\frac{R}{6}}} |\hat{u}|^2 \leq  C(1+\lambda)^4
e^{-2\beta\tilde{\phi}(r/2)} {(\frac{r}{2})}^{-n}\int_{\mathbb
B_{r}}|\hat{u}|^2
\end{eqnarray}
for $0<r<\frac{r_0}{36}$. Let $$\mathcal {R}= \frac{1}{2}
e^{-2\beta\tilde{\phi}(\frac{R}{6})}R^{-n}\int_{A_{\frac{R}{12},
\frac{R}{6}}} |\hat{u}|^2.$$ Then $$(1+\lambda)^{-4}e^{2\beta(\log
\frac{r}{2}+\log(\log \frac{r}{2})^2)} \mathcal {R}\leq
r^{-n}\int_{\mathbb B_{r}}|\hat{u}|^2.$$ Since $\log\big((\log
\frac{r}{2})^2\big)/2>0$ as $r$ is small, we have
$$ C(1+\lambda)^{-4}r^{2\beta} \mathcal
{R} \leq r^{-n}\int_{\mathbb B_{r}}|\hat{u}|^2.$$ From
$(\ref{rkao})$ and $(\ref{who})$, we also know that $$\beta\leq
C(1+\lambda)\leq C\lambda.$$ Furthermore, we get
$$ Ce^{-C\lambda} r^{C\lambda} \mathcal
{R}\leq r^{-n}\int_{\mathbb B_{r}}|\hat{u}|^2 .$$ Since $\lambda>1$,
it is also true that
\begin{equation}
 e^{-C\lambda} {(\frac{r}{\lambda})}^{C\lambda} \mathcal
{R}\leq {(\frac{r}{\lambda})}^{-n}\int_{\mathbb
B_{{\frac{r}{\lambda}}}}|\hat{u}|^2.
\end{equation}
From (\ref{kin}), we obtain
\begin{equation}
(\frac{r}{\lambda})^{-n+1}\int_{\mathbb
B_\frac{r}{\lambda}\cap\partial\Omega} u^2\geq
\mathcal{R}e^{-C\lambda} {(\frac{r}{\lambda})}^{C\lambda}.
\end{equation}
Therefore,
$$ \|u\|_{L^\infty({\mathbb
B_\frac{r}{\lambda}\cap\partial\Omega})}\geq
\mathcal{R}e^{-C\lambda} {(\frac{r}{\lambda})}^{C\lambda}$$ which
implies the conclusion of Theorem 2.
\end{proof}

{\bf Remark:} After having completed the paper, we learn from arXiv
that A. R$\ddot{u}$land considered some similar problems for
fractional Schr$\ddot{o}$dinger equations \cite{R} using a different
method.

\section{Acknowledgement}
The author is indebted to Professor C.D. Sogge for guiding me into
the area of eigenfunctions and constant support.

\end{document}